    \newcommand{\revise}[1]{#1}
    \newcommand{\revisetwo}[1]{#1}
    \newcommand{\revisethree}[1]{#1}
    \newcommand{\revise}[1]{#1}
    \newcommand{\revisetwo}[1]{#1}
    \newcommand{\revisethree}[1]{#1}
\def\BibTeX{{\rm B\kern-.05em{\sc i\kern-.025em b}\kern-.08em
    T\kern-.1667em\lower.7ex\hbox{E}\kern-.125emX}}
\newcommand{%
  \tikzsetnextfilename{}%
  \input{}%
}[1]{%
  \tikzsetnextfilename{#1}%
  \input{#1}%
}
\DeclareMathOperator{\tr}{tr}
\newtheorem{theorem}{Theorem}
\newtheorem{proposition}[theorem]{Proposition}
\newtheorem{lemma}[theorem]{Lemma}
\newtheorem{corollary}[theorem]{Corollary}
\theoremstyle{definition}
\newtheorem{definition}[theorem]{Definition}
\newtheorem{remark}[theorem]{Remark}
\newcommand{\equalsim}{\mathrel{\stackrel{\sim}{=}}}
\let\NAT@parse\undefined
\title{Almost Surely \(\sqrt{T} \) Regret for Adaptive LQR}
\author{Yiwen Lu and Yilin Mo%
\thanks{
The authors are with the Department of Automation and BNRist, Tsinghua University, Beijing, P.R.China. Emails:  luyw20@mails.tsinghua.edu.cn, ylmo@tsinghua.edu.cn.

This work is supported by the National Natural Science Foundation of China under Grant 62461160313, 62192752, 62273196, the BNRist project (No. BNR2024TD03003), and the 111 International Collaboration Project (No. B25027).
}}
\begin{document}

\maketitle
\thispagestyle{empty}
\pagestyle{empty}

\begin{abstract}
  The Linear-Quadratic Regulation (LQR) problem with unknown system parameters has been widely studied, but it has remained unclear whether \( \tilde{ \mathcal{O}}(\sqrt{T}) \) regret, which is the best known dependence on time, can be achieved almost surely. In this paper, we propose an adaptive LQR controller with almost surely \( \tilde{ \mathcal{O}}(\sqrt{T}) \) regret upper bound.
  \revise{
  The controller features a circuit-breaking mechanism, which falls back to a known stabilizing controller when the input signal or control gain is large, thereby circumventing potential safety breach and ensuring the convergence of the system parameter estimate. Meanwhile the circuit-breaking is shown to be triggered only finitely often, and hence has a negligible effect on the asymptotic performance of the controller.}
  The proposed controller is also validated via simulation on Tennessee Eastman Process~(TEP), a commonly used industrial process example.
\end{abstract}

\section{Introduction}

Adaptive control, the study of decision-making under the parametric uncertainty of dynamical systems, has been pursued for decades\revise{~\cite{aastrom1973self,morse1980global,lai1986extended,kuipers2010multiple,zhu2015adaptive,hou2019model}}.
Although early research mainly focused on the aspect of convergence and stability,
recent years have witnessed significant advances in the quantitative performance analysis of adaptive controllers, especially for multivariate systems.
In particular, in the adaptive Linear-Quadratic Regulation~(LQR) setting considered in this paper, the controller attempts to solve the stochastic LQR problem without access to the true system parameters, and its performance is evaluated via \emph{regret}, the cumulative deviation from the optimal cost over time.
\revisetwo{Recent studies have extensively focused on this adaptive LQR setting}~\cite{abbasi2011online,abeille2018improved,cohen2019learning,dean2018regret,simchowitz2020naive,faradonbeh2020adaptive,wang2021exact}, but it has remained unclear whether adaptive controllers for LQR can achieve \( \tilde{\mathcal{O}}(\sqrt{T}) \) regret, whose asymptotic dependence on the time \( T \) is the best known (up to poly-logarithmic factors)~\footnote{It is known that \( \sqrt{T} \) is the optimal rate of \revise{expected regret~\cite{simchowitz2020naive}, i.e., $\mathbb{E}[\text{Regret}(T)] = \Theta(\sqrt T)$}, and we suspect that it is also the optimal almost sure rate.}, \emph{almost surely}.

\revisetwo{
As summarized in Table~\ref{tbl:review}, existing regret upper bounds on adaptive LQR often provide probabilistic guarantees that are weaker than almost-sure or exhibit suboptimal asymptotic time dependence.
Strategies such as optimism-in-face-of-uncertainty~\cite{cohen2019learning}, Thompson sampling~\cite{abeille2018improved}, and the \( \epsilon \)-greedy algorithm~\cite{simchowitz2020naive} can achieve \( \tilde{\mathcal{O}}(\sqrt{T}) \) regret with a probability of at least \( 1 - \delta \).} In other words, these algorithms may not converge to the optimal one or could even be destablizing with a non-zero probability $\delta$.
In practice, such a failure probability may hinder the application of these algorithms in safety-critical scenarios.
From a theoretical perspective, we argue that it is highly difficult to extend these algorithms to provide stronger performance guarantees. To be specific, despite different exploration strategies, the aforementioned methods all compute the control input using a linear feedback gain synthesized from a least-squares estimate of system parameters. Due to Gaussian process noise, the derived linear feedback gain may be destabilizing, regardless of the amount of data collected. For these algorithms, the probability of such catastrophic event is bounded by a positive $\delta>0$, which is a predetermined design parameter and \revisetwo{remains fixed} during online operation.

Alternative methods that may preclude the above described failure probability include introducing additional stability assumptions, using parameter estimation algorithms with stronger convergence guarantees, and adding a layer of safeguard around the linear feedback gain.
Faradonbeh et al.~\cite{faradonbeh2020adaptive} \revise{achieve} \( \tilde{\mathcal{O}}(\sqrt{T}) \) regret almost surely under the assumption that the closed-loop system remains stable all the time, based on a stabilization set obtained from adaptive stabilization~\cite{faradonbeh2018finite}. \revise{Given that the stabilization set is derived from finite data and breaches the desired property with a non-zero probability, the method inherently possesses a non-zero probability of failure.}
Guo~\cite{guo1996self} achieves sub-linear regret almost surely by adopting a variant of ordinary least squares with annealing weight assigned to recent data, a parameter estimation algorithm convergent even with unstable trajectory data. However, the stronger convergence guarantee may come at the cost of less sharp asymptotic rate, and it is unclear whether the regret of this method can achieve \( \tilde{\mathcal{O}}(\sqrt{T}) \) dependence on time.
Wang and Janson~\cite{wang2021exact} \revise{achieve} \( \tilde{\mathcal{O}}(\sqrt{T}) \) regret with a convergence-in-probability guarantee, via the use of a switched, rather than linear feedback controller, which falls back to a known stabilizing gain on the detection of large states. However, the controller design of \revise{that} work does not rule out the frequent switching between the learned and fallback gains, a typical source of instability in switched linear systems~\cite{lin2009stability}, which restricts the correctness of their results to the case of commutative closed-loop system matrices. Moreover, the regret analysis in that work is not sufficiently refined to lead to almost sure guarantees.

\begin{table}[!htbp]
    \centering
    \begin{threeparttable}
        \caption{Comparison with selected existing works on adaptive LQR}
        \label{tbl:review}
        \begin{tabular}{@{}lll@{}}
        \toprule
        \bf{Work} & \bf{Rate} & \bf{Type of guarantee}            \\ \midrule
        \cite{guo1996self}    &    Not provided   & Almost sure                  \\
        \cite{cohen2019learning,abeille2018improved,simchowitz2020naive}     &  \( \tilde{ \mathcal{O}}(\sqrt{T}) \)    & Probability \( 1 - \delta \) \\
        \cite{faradonbeh2020adaptive}     &   \( \tilde{ \mathcal{O}}(\sqrt{T}) \)    & Almost sure\tnote{*}                  \\
        \cite{wang2021exact}    &   \( \tilde{ \mathcal{O}}(\sqrt{T}) \)   & Convergence in probability\tnote{*}   \\
        \bf{This work}    &   \( \tilde{ \mathcal{O}}(\sqrt{T}) \)   & Almost sure                  \\ \bottomrule
        \end{tabular}
        \begin{tablenotes}
            \item[*] Requires non-standard assumptions; please refer to the \revise{body} text for details.
        \end{tablenotes}
    \end{threeparttable}
\end{table}

In this paper, we present an adaptive LQR controller with \( \tilde{\mathcal{O}}(\sqrt{T}) \) regret almost surely, only assuming the availability of a known stabilizing feedback gain, which is a common assumption in the literature.
This is achieved by a ``circuit-breaking'' mechanism motivated similarly as~\cite{wang2021exact}, which circumvents safety breach by supervising the norm of the state and deploying the feedback gain when necessary. In contrast to~\cite{wang2021exact}, however, by enforcing a properly chosen dwell time on the fallback mode, the stability of the closed-loop system under our proposed controller is unaffected by switching.
Another insight underlying our analysis is that the above mentioned circuit-breaking mechanism is triggered only finitely often. \revise{This observation suggests that the conservativeness of the proposed controller, while preventing early-stage destabilization and ensuring convergence of system parameter estimates, has a negligible impact on the system's asymptotic performance.} Although similar phenomena have also been observed in~\cite{lei1988convergence,wang2021exact}, we derive an upper bound on the time of the last trigger (Theorem~\ref{thm:prop}, item~\ref{item:Tnocb})), a property missing from pervious works that paves the way to our almost sure regret guarantee.

The remainder of this manuscript is organized as follows: Section~\ref{sec:problem} introduces the problem setting. Section~\ref{sec:controller} describes the proposed controller. Section~\ref{sec:theory} states and proves the theoretical properties of the closed-loop system under the proposed controller, and establishes the main regret upper bound.
Section~\ref{sec:simulation} validates the theoretical results using a numerical example. Finally, Section~\ref{sec:conclusion} summarizes the manuscript and discusses directions for future work.

\subsection*{Notations}

The set of nonnegative integers is denoted by $\mathbb{N}$, and the set of positive integers is denoted by $\mathbb{N}^*$.
The $n$-dimensional Euclidean space is denoted by $\mathbb{R}^n$, and the $n$-dimensional unit sphere is denoted by $\mathbb{S}^n$.
The \( n\times n \) identity matrix is denoted by \( I_n \).
For a square matrix $M$, $\rho(M)$ denotes the spectral radius of $M$, and $\tr(M)$ denotes the trace of $M$.
For a real symmetric matrix $M$, $M\succ 0$ denotes that $M$ is positive definite.
For any matrix $M$, $M^\dag$ denotes the Moore-Penrose inverse of $M$.
For two vectors $u, v\in \mathbb{R}^n$, $\langle u, v \rangle$ denotes their inner product.
For a vector $v$, $\|v\|$ denotes its 2-norm, and $\| v \|_P = \| P^{1/2} v \|$ for $P \succ 0$.
For a matrix $M$, $\|M\|$ denotes its induced 2-norm, and $\| M \|_F$ denotes its Frobenius norm.
For a random vector $x$, $x\sim \mathcal{N}(\mu,\Sigma)$ denotes $x$ is Gaussian distributed with mean $\mu$ and covariance $\Sigma$.
For a random variable $X$, $X \sim \chi^2(n)$ denotes $X$ has a chi-squared distribution with $n$ degrees of freedom.
$\mathbb{P}(\cdot)$ denotes the probability operator, and $\mathbb{E}[\cdot]$ denotes the expectation operator. \revise{For a random event $\mathcal{E}$, $\bar{\mathcal{E}}$ denotes the complement of $\mathcal{E}$.}

For non-negative quantities $f,g$, which can be deterministic or random, we say $f \lesssim g$ to denote that $f \leq C_1 g + C_2$ for some universal \revise{(possibly system-dependent)} constants $C_1 > 0$ and $C_2 > 0$, and $f \gtrsim g$ to denote that $g \lesssim f$.
For a random function $f(T)$ and a deterministic function $g(T)$, we say $f(T) = \mathcal{O}(g(T))$ to denote that $\limsup_{T\to\infty} f(T) / g(T) < \infty$, and $f(T) = \tilde{\mathcal{O}}(g(T))$ to denote that $f(T) = \mathcal{O}(g(T) (\log(T))^\alpha)$ for some $\alpha > 0$.

\section{Problem formulation}
\label{sec:problem}

This paper considers a fully observed discrete-time linear system \revisetwo{subject to} Gaussian process noise\revisetwo{, defined by the following equation}:
\begin{equation}
    x_{k+1} = A x_k + B u_k + w_k,\quad k \in \mathbb{N}^*, \quad x_1 = 0,
    \label{eq:dyn}
\end{equation}
where $x_k \in \mathbb{R}^n$ is the state, $u_k \in \mathbb{R}^m$ is the control input, and $w_k \stackrel{\text { i.i.d. }}{\sim} \mathcal{N}(0, W)$  is the process noise, where \( W \succeq 0 \).
It is assumed without loss of generality that \( W = I_n \), but all the conclusions apply to general positive semidefinite \( W \) up to the scaling of constants.
It is also assumed that the system and input matrices $A, B$ are unknown to the controller, but $(A,B)$ is controllable.

\revisetwo{We evaluate the system performance using an average infinite-horizon quadratic cost, which reflects the long-term cost effectiveness of control strategies:}
\begin{equation}
    J=\limsup _{T \rightarrow \infty} \frac{1}{T} \mathbb{E}\left[\sum_{k=1}^{T} x_k^\top Q x_k + u_k^\top R u_k \right],
    \label{eq:J}
\end{equation}
where $Q \succ 0, R \succ 0$ are fixed weighting matrices specified by the system operator.

It is well known that the optimal control law is the linear feedback control law of the form $u(x) = K^* x$, where the optimal feedback gain $K^*$ can be specified as:
\begin{equation}
    K^*=-\left(R+B^{\top} P^* B\right)^{-1} B^{\top} P^* A,
    \label{eq:K}
\end{equation}
and $P^*$ is the positive definite solution to the discrete algebraic Riccati equation:
\begin{equation}
    P^*=A^\top P^* A-A^\top P^* B\left(R+B^\top P^* B\right)^{-1}B^\top P^* A+Q.
    \label{eq:dare}
\end{equation}
The corresponding optimal cost is
\begin{equation}
    J^* = \tr\left( \mathbb{E}\left[w_kw_k^\top\right] P^*\right) = \tr(WP^*) = \tr(P^*).
    \label{eq:Jstar}
\end{equation}
The matrix \(P^*\) also satisfies the discrete Lyapunov equation
\begin{equation}
    (A+BK^*)^\top P^* (A+BK^*) - P^* + Q + K^\top R K = 0,
    \label{eq:lyap_opt}
\end{equation}
which implies that there exists a scalar $0 < \rho^* < 1$ such that
\begin{equation}
    (A+BK^*)^\top P^* (A+BK^*) \prec \rho^* P^*.
    \label{eq:rho_star}
\end{equation}

\revise{
We assume that the system is open-loop stable, i.e., $\rho(A) < 1$.
Consequently, there exists $P_0 \succ 0$ that satisfies the discrete Lyapunov equation
\begin{equation}
    A^\top P_0 A - P_0 + Q = 0,
    \label{eq:P0}
\end{equation}
and there exists a scalar $0 < \rho_0 < 1$ such that
\begin{equation}
    A^\top P_0 A \prec \rho_0 P_0.
    \label{eq:rho0}
\end{equation}
}

\begin{remark}
    It has been commonly assumed in the literature~\cite{simchowitz2020naive,wang2021exact} that \( (A, B) \) is stabilizable by a known feedback gain \( K_0 \), i.e., \( \rho(A+BK_0) < 1 \).
    We assume without loss of generality that the system is open-loop stable, i.e., \( K_0 = 0 \), for the simplicity of notations.
    \revise{However, all the results in this paper can be extended to $K_0 \neq 0$, which we discuss in Section~\ref{sec:unstable}.}
    \label{rem:prestabilizer}
\end{remark}

Since $A,B$ are unknown to the controller in the considered setting, it is not possible to directly compute the optimal control law from~\eqref{eq:K} and~\eqref{eq:dare}. Instead, the controller learns the optimal control law online, whose performance is measured via the \emph{regret} defined as follows:
\begin{equation}
    \mathcal{R}(T) = \sum_{k=1}^T (x_k^\top Qx_k + u_k^\top Ru_k)- TJ^*.
    \label{eq:regret}
\end{equation}
The goal of the controller is to minimize the asymptotic growth of $\mathcal{R}(T)$.

\section{Controller design}
\label{sec:controller}

\revisetwo{
Algorithm~\ref{alg:main} outlines the proposed controller, and Fig.~\ref{fig:block} provides a visual illustration. This section explains each component of the controller.
}

\begin{algorithm}[!htbp]
    \begin{algorithmic}[1]
        \State $\hat{K}_0 \gets 0$
        \State $\xi \gets 0$
        \For{$k=1,2,\ldots$}
            \State Update parameter estimates $\hat{A}_k, \hat{B}_k$ using~\eqref{eq:ols}
            \If{$(\hat{A}_k, \hat{B}_k)$ is \revise{stabilizable}} \label{line:controllable}
                \State Update $\hat{K}_k$ from~\eqref{eq:K}-\eqref{eq:dare}, replacing $(A,B)$ with $(\hat{A}_k, \hat{B}_k)$. \label{line:update}
            \Else
                \State $\hat{K}_k \gets 0$ \label{line:uncontrollable}
            \EndIf
            \State $u^{ce}_k \gets \hat{K}_k x_k$
            \If{$\xi = 0$} \label{line:switching_start}
                \If { $\max\{\| u^{ce}_k \|, \| \hat{K}_k \| \} > M_k := \log(k)$ }
                    \State $\xi \gets t_k := \lfloor \log(k) \rfloor$
                    \label{line:tk}
                    \State $u^{cb}_k \gets 0$
                \Else \label{line:switching_else}
                    \State $u^{cb}_k \gets u^{ce}_k$
                \EndIf
            \Else
                \State $u^{cb}_k \gets 0$
                \State $\xi \gets \xi - 1$ \label{line:switching_end}
            \EndIf
            \State $u^{pr}_k \gets k^{-1/4}v_k$, where $v_k \sim \mathcal{N}(0,I_m)$
            \State Apply $u_k \gets u^{cb}_k + u^{pr}_k$
        \EndFor
    \end{algorithmic}
    \caption{Proposed controller}
    \label{alg:main}
\end{algorithm}

\tikzexternaldisable
\begin{figure}[!htbp]
  \tikzsetnextfilename{figures/block_diagram.tex}%
  \usetikzlibrary{shapes.misc,positioning,calc,graphs,arrows.meta,quotes,fit,backgrounds}
\tikzset{pin/.style={
    circle,
    minimum size=3mm,
    thick, draw,
    inner sep=0
}}
\tikzset{block/.style={
    rectangle,
    thick, draw,
    inner sep=1ex, align=center
}}
\tikzset{wide block/.style={
    block,
    text width=10ex
}}
\tikzset{ultra wide block/.style={
    block,
    text width=14ex
}}
\tikzset{skip/.style={
    to path={-- ++(0, #1) -| (\tikztotarget)}
}}
\tikzset{reverse_S/.style={
    to path={-- ++(#1, 0) |- (\tikztotarget)}
}}
\tikzset{vh/.style={
    to path={|- (\tikztotarget)}
}}
\tikzset{hv/.style={
    to path={-| (\tikztotarget)}
}}
\def\centerarc[#1](#2)(#3:#4:#5)%
    { \draw[#1] ($(#2)+({#5*cos(#3)},{#5*sin(#3)})$) arc (#3:#4:#5); }

\begin{tikzpicture}[scale=0.8,>=latex, every node/.style={scale=0.85}]
    \matrix[row sep=6, column sep=-5] {
    & & \node[ultra wide block] (logic) {Circuit-breaking logic \revise{(Line~\ref{line:switching_start}-\ref{line:switching_end} of Algorithm~\ref{alg:main})}}; & & \\
    & \node[pin] (pin_K0) {} node [above right=2mm] {$\mkern40mu$} node [left=8mm] (zero) {};  & &\\
    \node[block] (K1) {$\hat{K}$}; & \node[pin] (pin_K1) {} node [below=2mm] {$u^{ce}$}; & \node[pin] (pin_mid) {} node [below=2mm] {$u^{cb}$}; & \node[pin] (pin_u) {+} ; & \node[wide block] (plant) {Plant (eq.~\eqref{eq:dyn})};\\
    &  & & \node[wide block] (est) {Estimator (eq.~\eqref{eq:ols})}; & \\
    };

    \coordinate (K1 west) at ($(K1) - (1, 0)$);
    \coordinate (noise) at ($(plant) + (0, 2)$);
    \coordinate (probe) at ($(pin_u) + (0, 2)$);
    \coordinate (next state) at ($ (plant.east) + (1, 0) $);
    \coordinate (feedback point) at ($ (plant.east)!0.5!(next state) $);
    \coordinate (switch arc start) at ($(pin_K1)!0.5!(pin_mid)$);
    \path let \p{1}=(pin_K1), \p{2}=(pin_mid), \n{x dist}={abs(\x{2}-\x{1})}, \p{3}=($(pin_mid)-(pin_K0)$), \n{arc angle}={180+asin(\y{3} / \x{3})} in (pin_K1) arc (180:160:\n{x dist}) node (switch start) {};
    \draw [->] let \p{1}=(switch arc start), \p{2}=(pin_mid), \p{3}=(pin_K1), \p{4}=(pin_K0), \n{x dist}={abs(\x{2}-\x{1})}, \n{arc angle}={180-asin((\y{4} - \y{3}) /(\x{2} - \x{1}))} in (switch arc start) node [below] { \( \xi = 0 \)} arc (180:135:\n{x dist}) node [above] { \( \xi > 0 \)};

    \coordinate (xi_end_1) at ($ (pin_mid) + (0, 0.7) $);
    \coordinate (xi_end_2) at ($ (xi_end_1) + (-0.3, -0.3) $);
    \graph [use existing nodes, edges=rounded corners] {
        K1 -> pin_K1,
        logic --["$\xi$"] xi_end_1 -> xi_end_2,
        noise ->["$w$"] plant,
        probe ->["$u^{pr}$"] pin_u,
        pin_mid -> pin_u ->["$u$"] plant ->["$x$"] next state,
        feedback point ->[vh] est,
        feedback point --[skip=-3cm] K1 west ->[vh] logic.170,
        K1 west -> K1.west,
        K1 ->[vh] logic.190,
        pin_u -> est,
        est ->[hv] K1,
        switch start -- pin_mid,
        zero ->["0"] pin_K0,
    };
\end{tikzpicture}%

    \caption{Block diagram of the closed-loop system under the proposed controller. The control input is the superimposition of a deterministic input $u^{cb}$ and a random probing input $u^{pr}$. The deterministic part $u^{cb}$ is normally the same as the certainty equivalent input $u^{ce}$, but takes the value zero when circuit-breaking is triggered, where $\xi$ is a counter for circuit-breaking. The certainty equivalent gain is updated using the parameter estimator in the meantime.}
    \label{fig:block}
\end{figure}
\tikzexternalenable

The proposed controller is a variant of the certainty equivalent controller~\cite{mania2019certainty}, where the latter applies the input $u^{ce}_k = \hat{K}_k x_k$, \revise{with the feedback gain $\hat{K}_k$ being calculated} from~\eqref{eq:K}-\eqref{eq:dare} by treating the current estimates of the system parameters $\hat{A}_k, \hat{B}_k$ as the true values.
The proposed controller differs from the standard certainty equivalent controller by i) including a ``circuit-breaking'' mechanism that replaces $u^{ce}_k$ with zero under certain conditions; ii) superimposing a probing noise $u^{pr}_k$ on the control input at each step.

\revisetwo{
The circuit-breaking mechanism sets $u^{ce}_k$ to zero for the subsequent $t_k$ steps if the norm of either the certainty equivalent control input $\| u^{ce}_k \|$ or the feedback gain $\left\| \hat{K}_k \right\|$ exceeds a threshold $M_k$.}
The intuition behind this design is that a large certainty equivalent control input is indicative of having applied a destabilizing feedback gain recently, and circuit-breaking may prevent the state from exploding by leveraging the innate stability of the system, and hence help with the convergence of the parameter estimator and the asymptotic performance of the controller.
The threshold $M_k$ \revisetwo{increases with the time index $k$, reducing the conservativeness of circuit-breaking as the parameter estimates become more accurate.}
The time $t_k$, \revise{known as dwell time in switching control~\cite{ishii2001stabilizing}}, is also increased with time, in order to circumvent the potential oscillation of the system caused by the frequent switching between $\hat{K}$ and $0$ \revise{(see~\cite[Appendix C]{lu2023almost} for an illustrative example)}.
Both $M_k$ and $t_k$ are chosen to grow logarithmically with $k$, which would support our technical guarantees.

Following the approach in~\cite{simchowitz2020naive,wang2021exact}, probing noise $u_k^{pr}$ is added to the control input at each step to ensure sufficient excitation to the system, necessary for accurate parameter estimation. Specifically, the probing noise is chosen to be $u_k^{pr} = k^{-1/4}v_k$, where $v_k \stackrel{\text { i.i.d. }}{\sim}  \mathcal{N}(0,I_m)$, which would correspond to the optimal rate of regret.

The estimates of the system parameters $\hat{A}_k, \hat{B}_k$ are updated using an ordinary least squares estimator. Denote $\Theta = [A\quad B]$, $\hat{\Theta}_k = [\hat{A}_k \quad \hat{B}_k]$, and $z_k = [x_k^\top \quad u_k^\top]^\top$, then according to \revise{$x_{k+1} = \Theta z_k+w_k$}, the Ordinary Least Squares~(OLS) estimator can be specified as:
\begin{equation}
    \hat{\Theta}_k = \left(\sum_{t=1}^{k-2} x_{t+1} z_{t}^\top\right)\left(\sum_{t=1}^{k-2} z_{t} z_{t}^\top\right)^\dag.
    \label{eq:ols}
\end{equation}

\begin{remark}
    \revise{
    Since the OLS estimator~\eqref{eq:ols} can be updated incrementally in constant time per step as the total time steps $k$ increases, using the Sherman-Morrison formula~\cite{kailath2000linear} or incremental QR decomposition~\cite{hammarling2008updating}, the time complexity of Algorithm~\ref{alg:main} is $\mathcal{O}(1)$, i.e., the computation load does not grow as the number of time steps increases.
    }
    \label{rem:complexity}
\end{remark}

\begin{remark}
    In the presented algorithm, the certainty equivalent gain is updated at every step \( k \) (see line~\ref{line:update} of Algorithm~\ref{alg:main}), but it may also be updated ``logarithmically often''~\cite{wang2021exact}, e.g., at steps \( k = 2^i, i\in \mathbb{N}^* \), which is more computationally efficient in practice. It can be verified that all our theoretical results also apply to the case of episodic updates.
    \label{rem:log_often}
\end{remark}

\section{Theoretical results}
\label{sec:theory}

\subsection{Main results}

Our analysis of the closed-loop system relies on two key random times, defined as follows:

\begin{align}
    & T_{\text{stab}} := \inf \left\{ T \vphantom{\left(A^{\lfloor \log k \rfloor}\right)^\top} \right. \left|   \left(A^{t_k}\right)^\top P^* A^{t_k} < \rho P^*, \right.\nonumber \\
    & \mkern10mu \left. \vphantom{\left(A^{t_k}\right)^\top} \left(A+B\hat{K}_k\right)^\top P^* \left(A+B\hat{K}_k\right) < \rho P^*,  \forall k \geq T \right\},
    \label{eq:Tstab}
\end{align}
where \( \rho = (1 + \rho^*) / 2 \), and \( t_k \) is the dwell time defined in line~\ref{line:tk} of Algorithm~\ref{alg:main}. \revisetwo{Additionally,}
\begin{equation}
    T_{\text{nocb}} := \inf \left\{ \revise{T \mid T \geq T_{\text{stab}} \text{ and }} u_k^{cb} \revise{=} u_k^{ce}, \forall k\geq T \right\},
    \label{eq:Tnocb}
\end{equation}
With the above two random times, the evolution of the system can be divided into three stages:
\begin{enumerate}
    \item From the beginning until \( T_{\text{stab}} \), the adaptive controller refines its estimate of system parameters, improving the performance of the certainty equivalent feedback gain. This process continues until the system stabilizes, with common Lyapunov function between the two modes under circuit-breaking as indicated in~\eqref{eq:Tstab}.
    \item From \( T_{\text{stab}} \) to \( T_{\text{nocb}} \), the closed-loop system is stable as is ensured by the aforementioned common Lyapunov function, and under mild regularity conditions on the noise, an upper bound on the magnitude \revise{of the} certainty equivalent control input \( \| u^{ce}_k \| \) eventually drops below the circuit-breaking threshold \( M_k \).
    \item From \( T_{\text{nocb}} \) on, circuit-breaking is not triggered any more, and the system behaves similarly as the closed-loop system under the optimal controller, with only small perturbations on the feedback gain which stem from the parameter estimation error and converge to zero.
\end{enumerate}

The following theorem outlines several properties of the closed-loop system, which help quantify the aforementioned random times:

\begin{theorem}[\revise{Properties of closed-loop system}]
    Let \( n,m \) be the dimensions of the state and input vectors respectively, and \( P_0, \rho_0, P^*, \rho^* \) be defined in~\eqref{eq:P0}, \eqref{eq:rho0}, \eqref{eq:dare}, \eqref{eq:rho_star} respectively. \revise{Let $\bar{n} = m + n$}, then the following properties hold:
    \begin{enumerate}
        \item For $0 < \delta \leq 1 / 2$, the event
        \begin{align}
            &\mathcal{E}_{\text{noise}}(\delta) := \left\{\max\{\| w_k \|, \| v_k \| \} \leq \vphantom{\sqrt{()}}\right. \nonumber\\
            &\mkern100mu\left. \revise{2\sqrt{\bar{n}\log(k / \delta)}}, \forall k \in \mathbb{N}^*\right\}
            \label{eq:Enoise}
        \end{align}
        occurs with probability at least $1 - 2\delta$.
        \label{item:Enoise}

        \item For $0 < \delta \leq 1/(8n^2)$, the event
        \begin{align}
            & \mathcal{E}_{\text{cov}} (\delta) := \left\{ \left\| \sum_{i=1}^k (w_i w_i^\top - I_n) \right\| \leq \right.\nonumber \\
            &\mkern100mu \left. 7 n\sqrt{k} \log(8n^2k / \delta), \forall k \in \mathbb{N}^* \vphantom{\sum_i^i}\right\}
            \label{eq:Ecov}
        \end{align}
        occurs with probability at least $1 - \delta$.
        \label{item:Ecov}

        \item On the event $\mathcal{E}_{\text{noise}}(\delta)$, it holds
        \begin{equation*}
            \| x_k \| \leq C_x \log(k / \delta), \forall k \in \mathbb{N}^*,
        \end{equation*}
        where
        \begin{equation*}
            C_x = \frac { (\| B \| + 1)(2\sqrt{\revise{\bar{n}}} + 1) \| P_0 \| \| P_0^{-1} \| }{1 - \rho_0^{1/2}}.
        \end{equation*}
        \label{item:xnorm}

        \item For $0 < \delta \leq 1/14$, the event
        \begin{align}
            & \mathcal{E}_{\text{cross}} (\delta) := \left\{ \vphantom{\sum_i^i}\right. \nonumber \\
            & \max \left\{ \left\| \sum_{i=1}^k (w_i + B u_i^{pr})^\top P^*(Ax_i + Bu^{cb}_i) \right\|, \right. \nonumber \\
            &\mkern50mu \left.\left\| \sum_{i=1}^k (u_i^{pr})^\top R u_i^{cb} \right\|, \right. \nonumber \\
            &\mkern50mu \left.\left\| \sum_{i=1}^k w_i^\top P^* B u_i^{pr} \right\|\right\}\leq \nonumber \\
            &\mkern100mu \left. C_{\text{cross}}\sqrt{k} (\log(k/\delta))^2, \forall k \in \mathbb{N}^* \vphantom{\sum_i^i}\right\}
            \label{eq:Ecross}
        \end{align}
        occurs with probability at least $1 - 14\delta$, where
        \begin{align}
            & C_{\text{cross}} = 4\sqrt{\revise{\bar{n}}}  (\| B \| + 1) \cdot \nonumber \\
            & \quad (\| P^* \|\| A \| C_x + \| P^* \|\| B \| + \| R \|).
        \end{align}
        \label{item:Ecross}

        \item
        For $\delta$ satisfying
        \begin{equation}
            0 < \delta < \min \left\{ (38400 C_V)^{-1}, \exp \left(  -22 (m+n)^{1/3} \right) \right\},
            \label{eq:delta_range}
        \end{equation}
        the event
        \begin{align}
            & \mathcal{E}_{\text{est}} (\delta) := \left\{ \left\| \hat{\Theta}_k - \Theta \right\|^2 \leq C_\Theta k^{-1/2}(\log(k/\delta))^3, \right. \nonumber \\
            & \mkern100mu \left. \vphantom{\left\| \hat{\Theta}_k - \Theta \right\|^2} \forall k \geq k_0 \right\},
            \label{eq:Eest}
        \end{align}
        occurs with probability at least $1 - 6\delta$, where
        \begin{align}
            & k_0 = \lceil 4000(m+n)\log(1 / \delta) \rceil, \label{eq:k0} \\
            & C_V = \revise{(C_x + 2\sqrt{\revise{\bar{n}}}+1)^2}, \label{eq:CV} \\
            & C_\Theta = (25600n / 3)(5n/2+2).\label{eq:CTheta}
        \end{align}
        \label{item:Eest}

        \item On the event $ \mathcal{E}_{\text{est}}(\delta)$, it holds
        \begin{equation}
            T_{\text{stab}} \lesssim (\log(1/\delta))^2.
            \label{eq:Tstab_bound}
        \end{equation}
        \label{item:Tstab}

        \item On the event $\mathcal{E}_{\text{noise}}(\delta) \cap \mathcal{E}_{\text{est}}(\delta)$, for any $\alpha > 0$, it holds
        \begin{equation}
            T_{\text{nocb}} \lesssim (1 / \delta)^\alpha,
            \label{eq:Tnocb_bound}
        \end{equation}
        \revise{i.e., as $\delta$ decreases, the growth of $T_{\text{nocb}}$ can be faster than $\operatorname{polylog}(1/\delta)$, but is slower than any polynomial of $1/\delta$.}
        \label{item:Tnocb}
    \end{enumerate}
    \label{thm:prop}
    \revise{
    Here, the notation ``$\lesssim$'' means that the left-hand side is bounded by the right-hand side up to a constant factor, where the constant depends on the system parameters, but not $\delta$ or $\alpha$.
    }
\end{theorem}

\begin{proof}[Proof Sketch]
    \revise{
    We provide a brief and informal sketch of the proof here, and defer the rigorous proof to Appendix~\ref{app:proof_prop}.
    \begin{enumerate}
        \item Corollary of Gaussian concentration.
        \item Corollary of concentration inequalities for quadratic forms of Gaussian vectors, due to Laurent and Massart~\cite{laurent2000adaptive}.
        \item Since the deterministic parts control inputs are bounded by $\log(k)$ under the circuit-breaking mechanism, the random noises are bounded by $\sqrt{\log(k / \delta)}$ according to item \ref{item:Enoise}), and the system is open-loop stable, it follows that the state norm is bounded by $\log(k / \delta)$.
        \item Corollaries of Azuma-Hoeffding inequality~\cite{azuma1967weighted} (noting that all the three summations on the LHS of the inequality are martingales).
        \item We first characterize the least-squares identification error in terms of the maximum and minimum eigenvalues of the regressor covariance matrix $V_k = \sum [x_k^\top\; u_k^\top]^\top[x_k^\top\; u_k^\top]$, using a result due to Abbasi-Yadkori et al.~\cite{abbasi2011online}:
        \begin{equation*}
            \left\| \hat{\Theta}_k - \Theta \right\|^2 \lesssim \frac{\log(\lambda_{\max}(V_k) / \delta)}{\lambda_{\min}(V_k)}.
        \end{equation*}
        Here, $\lambda_{\max}(V_k)$ can grows no faster than $\tilde{\mathcal{O}}(k)$, in light of item~\ref{item:xnorm}) and the log-boundedness of control input imposed by the circuit-breaking mechanism (see Proposition~\ref{prop:max_eig}). Meanwhile, $\lambda_{\min}(V_k)$ grows no slower than $\tilde{\mathcal{O}}(\sqrt{k})$ (see Proposition~\ref{prop:min_eig}), thanks to the sufficient excitation provided to the system by the isotropic probing noise $v_k$. Technically, this lower bound on $\lambda_{\min}(V_k)$ is established using an anti-concentration bound on block martingale small ball~(BMSB) processes~\cite{simchowitz2018learning}, and we show in Lemma~\ref{lemma:verify_bmsb} that $[x_k^\top\; u_k^\top]$ under the proposed controller, despite the presence of the circuit-breaking mechanism, has the BMSB property like in the case of standard certainty equivalent control with probing noise~\cite{simchowitz2020naive,wang2021exact}.
        \item On one hand, when the identification error in item~\ref{item:Eest} is sufficiently small, the certainty equivalent gain $\hat{K}_k$ is sufficiently close to the optimal gain $K^*$, and is therefore stabilizing. On the other hand, when both the modes under switching $A$ and $A+B\hat{K}_k$ are stable, a common Lyapunov function exists when the dwell time $t_k$ is sufficiently large~\cite{ishii2001stabilizing}, which can be reached since $t_k \to\infty$ by our design.
        \item After $T_{\text{stab}}$ is reached, the switched system is stable, and hence $\| x_k \| \lesssim \sqrt{\log(k / \delta)}$ in light of item~\ref{item:xnorm}). Meanwhile, the circuit-breaking threshold $M_k$ grows at the order of $\log(k)$. The quantity $T_{\text{nocb}}$ characterizes the time after which the upper bound on the state norm (and hence the upper bound on $\| u_k^{ce} \|$) drops below the threshold $M_k$, and hence circuit-breaking is no longer triggered. \qedhere
    \end{enumerate}
    }
\end{proof}

\begin{remark}
    \revisetwo{
    Theorem~\ref{thm:prop} offers several key observations:}

    Items \ref{item:Enoise}) and \ref{item:Ecov}) defines two high-probability events on the regularity of noise.
    Item \ref{item:xnorm}) bounds the state norm under regular noise, based on which item \ref{item:Ecross}) bounds the growth of a few cross terms that would be useful in regret analysis.
    Item \ref{item:Eest}) bounds the parameter estimation error, based on which item \ref{item:Tstab}) bounds \( T_{\text{stab}} \).
    Finally, item \ref{item:Tnocb}) states that the circuit-breaking mechanism is triggered only \emph{finitely}, and bounds \( T_{\text{nocb}} \), the time after which the circuit-breaking is not triggered any more.

    \label{rem:prop}
\end{remark}

\begin{remark}
    \revisethree{
    The convergence of system identification results (item \ref{item:Eest}) relies on two factors:
    \begin{enumerate}
        \item The persistence of excitation provided by the probing noise $u_k^{pr}$, which decays at a rate of $\mathcal{O}(k^{-1/4})$.
        \item The bounded magnitude of the feedback control signal $u_k^{cb}$, which is enforced to be $\mathcal{O}(\log k)$ by the circuit-breaking mechanism.
    \end{enumerate}
    These conditions are sufficient to ensure the convergence of parameter estimates, regardless of the stabilizability of individual $(\hat{A}_k, \hat{B}_k)$ pairs at any given time step.
    }
    \label{rem:estimation}
\end{remark}

The following corollary characterizes the tail probabilities of $T_{\text{stab}}$ and $T_{\text{nocb}}$:

\begin{corollary}
    For $T_{\text{stab}}$ defined in~\eqref{eq:Tstab} and $T_{\text{nocb}}$ defined in~\eqref{eq:Tnocb}, as $T\to\infty$, it holds
    \begin{enumerate}
        \item \begin{equation}
            \mathbb{P}(T_{\text{stab}} \geq T) = \mathcal{O}(\exp(-c\sqrt{T})),
            \label{eq:Tstab_tail}
        \end{equation}
        where $c > 0$ is a system-dependent constant.

        \item For any $\alpha > 0$, \begin{equation}
            \mathbb{P}(T_{\text{nocb}} \geq T) = \mathcal{O}(T^{-\alpha}),
            \label{eq:Tnocb_tail}
        \end{equation}
        \revise{
        and consequently,
        \begin{equation}
            T_{\text{nocb}} < \infty, \quad \text{a.s.}
            \label{eq:Tnocb_finite}
        \end{equation}
        }
    \end{enumerate}
    \label{col:time_tail}
\end{corollary}

\begin{proof}
    \revise{
        The bounds~\eqref{eq:Tstab_tail} and~\eqref{eq:Tnocb_tail} follow from invoking items \ref{item:Tstab}) and \ref{item:Tnocb}) of Theorem~\ref{thm:prop} with $\delta$ set to the RHS of the bounds respectively.
        The statement~\eqref{eq:Tnocb_finite} follows from invoking the Borel-Cantelli lemma, noting that the sum of the probabilities of the events $\left\{ T_{\text{nocb}} \geq T \right\}$ is finite under $\alpha > 1$.
    }
\end{proof}

Building upon the properties stated in Theorem~\ref{thm:prop}, a high-probability bound on the regret under the proposed controller can be ensured:

\begin{theorem}[\revise{Non-asymptotic regret bound}]
    Given a probability $\delta$, there exists a constant $T_{0} \lesssim (1 / \delta)^{1/4}$, such that for any fixed $T > T_0$, it holds with probability at least $1 - \delta$ that the regret defined in~\eqref{eq:regret} satisfies
    \begin{equation}
        \mathcal{R}(T) \lesssim (1 / \delta)^{1/4} + \sqrt{T}(\log(T / \delta))^5.
        \label{eq:R_prob}
    \end{equation}
    \label{thm:R_prob}
\end{theorem}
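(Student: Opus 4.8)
The plan is to reduce the whole argument to the deterministic estimates already assembled in Theorem~\ref{thm:prop}. First I would fix $\delta' = \delta/21$ and work on the event $\mathcal{E} := \mathcal{E}_{\text{noise}}(\delta')\cap\mathcal{E}_{\text{cov}}(\delta')\cap\mathcal{E}_{\text{cross}}(\delta')\cap\mathcal{E}_{\text{est}}(\delta')$, which by items \ref{item:Enoise}), \ref{item:Ecov}), \ref{item:Ecross}), \ref{item:Eest}) and a union bound has probability at least $1-\delta$. On $\mathcal{E}$ one has, simultaneously: $\|x_k\| \lesssim \log(k/\delta)$ for all $k$ (item \ref{item:xnorm})); $\|\hat{\Theta}_k - \Theta\|^2 \lesssim k^{-1/2}\log(k/\delta)$ for $k \ge k_0 \lesssim (m+n)\log(1/\delta)$; $T_{\text{stab}} \lesssim (\log(1/\delta))^2$ (item \ref{item:Tstab})); and, applying item \ref{item:Tnocb}) with a small exponent (say $\alpha = 1/5$), $T_{\text{nocb}} \lesssim (1/\delta)^{1/5}$. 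By the standard Lipschitz continuity of the DARE solution map near $(A,B)$, the estimation bound upgrades to $\|\hat{K}_k - K^*\| \lesssim \|\hat{\Theta}_k - \Theta\| \lesssim k^{-1/4}\sqrt{\log(k/\delta)}$ once $k$ exceeds a further threshold $k_1 \lesssim (\log(1/\delta))^2$. Taking $T_0$ to be the largest absolute constant implicit in the bounds on $T_{\text{nocb}}, k_0, k_1$ yields $T_0 \lesssim (1/\delta)^{1/4}$, and for the fixed $T > T_0$ under consideration all of $T_{\text{nocb}}, k_0, k_1$ lie below $T$. Everything below is argued on $\mathcal{E}$.

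Next I would split the regret at the last circuit-break: $\mathcal{R}(T) = \mathcal{R}_1 + \mathcal{R}_2$, with $\mathcal{R}_1 := \sum_{k<T_{\text{nocb}}}(x_k^\top Q x_k + u_k^\top R u_k) - (T_{\text{nocb}}-1)J^*$ and $\mathcal{R}_2 := \sum_{k=T_{\text{nocb}}}^{T}(x_k^\top Q x_k + u_k^\top R u_k - J^*)$. For $\mathcal{R}_1$, coarse bounds suffice: $\|u^{cb}_k\| \le M_k = \log k$ by construction of the breaker, and $\|u^{pr}_k\| = k^{-1/4}\|v_k\| \lesssim \sqrt{\log(k/\delta)}$ on $\mathcal{E}_{\text{noise}}$, so with item \ref{item:xnorm}) the per-step cost for $k<T_{\text{nocb}}$ is $\lesssim (\log(k/\delta))^2$; dropping the nonpositive $-J^*$ terms gives $\mathcal{R}_1 \lesssim T_{\text{nocb}}(\log(T_{\text{nocb}}/\delta))^2 \lesssim (1/\delta)^{1/4}$.

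For $\mathcal{R}_2$ the breaker is permanently off, so $u_k = \hat{K}_k x_k + u^{pr}_k$ and $x_{k+1} = (A+B\hat{K}_k)x_k + \zeta_k$ with $\zeta_k := Bu^{pr}_k + w_k$. I would combine the Riccati completion-of-squares identity
\[
  Q + \hat{K}_k^\top R\hat{K}_k + (A+B\hat{K}_k)^\top P^*(A+B\hat{K}_k) - P^* = (\hat{K}_k-K^*)^\top (R+B^\top P^* B)(\hat{K}_k-K^*)
\]
with $(A+B\hat{K}_k)x_k = x_{k+1}-\zeta_k$ and expand, so that each stage cost equals $x_k^\top P^* x_k - x_{k+1}^\top P^* x_{k+1} + w_k^\top P^* w_k + G_k + 2w_k^\top P^*(Ax_k + Bu^{cb}_k) + \Xi_k$, where $G_k := x_k^\top(\hat{K}_k-K^*)^\top(R+B^\top P^* B)(\hat{K}_k-K^*)x_k \ge 0$ and $\Xi_k$ gathers all terms carrying at least one factor $u^{pr}_k$. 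Summing over $k \in [T_{\text{nocb}},T]$ and subtracting $(T-T_{\text{nocb}}+1)J^*$: the telescope is $\le \|P^*\|\,\|x_{T_{\text{nocb}}}\|^2 \lesssim (\log(1/\delta))^2$; $\sum_k w_k^\top P^* w_k$ absorbs the $J^*$ terms up to $\tr\!\big(P^*\sum_k(w_kw_k^\top - I_n)\big) \lesssim \sqrt{T}\log(T/\delta)$ on $\mathcal{E}_{\text{cov}}$ (with the tail over $k<T_{\text{nocb}}$ costing $\lesssim (1/\delta)^{1/4}$); $\sum_k G_k \lesssim \sum_{k\le T}k^{-1/2}(\log(k/\delta))^3 \lesssim \sqrt{T}(\log(T/\delta))^3$, the dominant term; $\sum_k 2w_k^\top P^*(Ax_k+Bu^{cb}_k)$ is handled by $\mathcal{E}_{\text{cross}}$ after writing it as $\sum_{k=1}^T - \sum_{k=1}^{T_{\text{nocb}}-1}$, giving $\lesssim \sqrt{T}(\log(T/\delta))^2 + (1/\delta)^{1/4}$; and $\sum_k \Xi_k$ splits into diagonal pieces $(u^{pr}_k)^\top(R+B^\top P^* B)u^{pr}_k \lesssim k^{-1/2}\log(k/\delta)$ (summing to $\lesssim \sqrt{T}\log(T/\delta)$) plus cross terms that are martingales in $\{v_k\}$ with conditional second moment $\lesssim k^{-1/2}(\log(k/\delta))^2$, hence $\lesssim T^{1/4}\,\mathrm{polylog}$ by a standard self-normalised martingale bound. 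Adding these, $\mathcal{R}_2 \lesssim (1/\delta)^{1/4} + \sqrt{T}(\log(T/\delta))^3$, and therefore $\mathcal{R}(T) = \mathcal{R}_1 + \mathcal{R}_2 \lesssim (1/\delta)^{1/4} + \sqrt{T}(\log(T/\delta))^3$ on $\mathcal{E}$, which has probability at least $1-\delta$.

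The step I expect to be most delicate is the completion-of-squares decomposition and the routing of the cross terms it leaves behind. Because $K^*$ is the Riccati minimiser, the gain-error cost enters \emph{quadratically}: $G_k \asymp \|\hat{K}_k-K^*\|^2\|x_k\|^2 \lesssim k^{-1/2}(\log(k/\delta))^3$, so $\sum_k G_k$ is $\tilde{\mathcal{O}}(\sqrt{T})$. A careless expansion would instead leave a term linear in $\|\hat{K}_k - K^*\|$, which would sum to $\tilde{\mathcal{O}}(T^{3/4})$ and destroy the rate; the decomposition must be arranged so that everything not of the form $x_k^\top P^* x_k - x_{k+1}^\top P^* x_{k+1}$, $w_k^\top P^* w_k$, or $G_k$ is either a martingale — absorbed by $\mathcal{E}_{\text{cross}}$ or a generic concentration inequality — or pointwise $\mathcal{O}(k^{-1/2})$ after the probing scaling $k^{-1/4}$. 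A secondary point requiring care is that every ``early'' contribution — $\mathcal{R}_1$, the $k<T_{\text{nocb}}$ tails of $\mathcal{E}_{\text{cov}}$ and $\mathcal{E}_{\text{cross}}$, and the gain-continuity threshold $k_1$ — must be $\lesssim (1/\delta)^{1/4}$, which is exactly where the freedom in item \ref{item:Tnocb}) to take $\alpha$ arbitrarily small is used, and which also pins down $T_0 \lesssim (1/\delta)^{1/4}$.
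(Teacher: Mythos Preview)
Your proposal is correct and follows essentially the same route as the paper's proof (Propositions~\ref{prop:regret_decomposition} and~\ref{prop:regret_terms}): intersect the four events of Theorem~\ref{thm:prop}, apply the Riccati completion-of-squares identity (Lemma~\ref{lemma:perturb_K}) so that the gain error enters quadratically, split that contribution at $T_{\text{nocb}}$ using item~\ref{item:Tnocb}) with a small $\alpha$, and absorb the remaining pieces via the telescope, $\mathcal{E}_{\text{cov}}$, and $\mathcal{E}_{\text{cross}}$; the paper merely orders things differently, decomposing $\mathcal{R}(T)$ into seven terms over all of $[1,T]$ first and handling the $T_{\text{nocb}}$ split inside the bound on its $\mathcal{R}_1$. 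The one substantive discrepancy is your treatment of the probing-noise cross terms in $\Xi_k$: you invoke an extra self-normalised martingale bound that is not accounted for in your union-bound budget $\delta'=\delta/21$, whereas the paper simply bounds these terms pointwise on $\mathcal{E}_{\text{noise}}$ (via $\|u^{pr}_k\|\lesssim k^{-1/4}\sqrt{\log(k/\delta)}$, $\|x_k\|\lesssim\log(k/\delta)$, and $\|K_k x_k\|\le\log k$), which already gives $\sqrt{T}(\log(T/\delta))^{3/2}$ without introducing a new event.
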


\begin{proof}[Proof Sketch]
    \revise{
    We provide a brief and informal sketch of the proof here, and defer the rigorous proof to Appendix~\ref{app:proof_R_prob}.
    }

    \revise{
    With some algebra, the regret can be decomposed into (see Proposition~\ref{prop:regret_decomposition}):
    \begin{align*}
        & \mathcal{R}(T) \equalsim \sum_{k=1}^T \| K_k - K^* \|^2 \| x_k \|^2 + \\
        & \quad \sum (\text{cross terms in~\eqref{eq:Ecross}}) + \sum (\text{quadratic terms of noise}),
    \end{align*}
    where the quadratic dependence on $\| K_k - K^* \|$ arises from a perturbation result on the Riccati equation (see Lemma~\ref{lemma:perturb_K}).
    Since the latter two terms can be bounded by $\tilde{\mathcal{O}}(\sqrt{T})$ according to Theorem~\ref{thm:prop}, items~\ref{item:Ecov}) and~\ref{item:Ecross}), the main task is to bound the first term.
    }

    \revise{
        We can further split the summation into two parts according to the time $T_{\text{stab}}$:
        \begin{align*}
            & \sum_{k = 1}^T \| K_k - K^* \|^2 \| x_k \|^2 = \\
            & \quad \underbrace{\sum_{k = 1}^{T_{\text{stab}}} \| K_k - K^* \|^2 \| x_k \|^2}_{\text{Term I}}  + \underbrace{\sum_{k = T_{\text{stab}} + 1}^T \| K_k - K^* \|^2 \| x_k \|^2}_{\text{Term II}}.
        \end{align*}
    }

    \revise{
        To bound Term I, note that $\| K_k \|$ is bounded by a constant by design, and that $\| x_k \| \sim \tilde{\mathcal{O}}(1)$ according to Theorem~\ref{thm:prop}, item~\ref{item:xnorm}), Term I can be bounded by $\tilde{\mathcal{O}}(T_{\text{stab}})$. Invoking Theorem~\ref{thm:prop}, item~\ref{item:Tstab}) with $\alpha = 1/4$ leads to the $(1/\delta)^{1/4}$ dependence in the regret bound. The reason for the choice $\alpha = 1/4$ will be evident from the proofs of Corollary~\ref{thm:R_tail} and Theorem~\ref{thm:main}: it ensures that $\delta(T)$ for different $T$'s are summable while keeping $(1/\delta)^\alpha$ at the order of $\sqrt{T}$, contributing to the final almost surely $\tilde{\mathcal{O}}(\sqrt{T})$ regret bound.
    }

    \revise{
        To bound Term II, note that $K_k = \hat{K}_k$ for all $k > T_{\text{nocb}}$ by definition of $T_{\text{nocb}}$, i.e., the certainty equivalent gain is always used. The error of the certainty equivalent gain is bounded by $\tilde{\mathcal{O}}(1/\sqrt{T})$ according to Theorem~\ref{thm:prop}, item~\ref{item:Eest}), summing up to $\tilde{\mathcal{O}}(\sqrt{T})$.
    }
\end{proof}

As corollaries of Theorem~\ref{thm:R_prob}, one can obtain a bound on the tail probability of $\mathcal{R}(T)$ (\revise{Corollary}~\ref{thm:R_tail}), and the main conclusion of this work, i.e., an almost sure bound on $\mathcal{R}(T)$ (Theorem~\ref{thm:main}):

\begin{corollary}
    For sufficiently large $T$, it holds
    \begin{equation*}
        \mathbb{P}\left( \mathcal{R}(T) \geq C_R \sqrt{T}(\log ( T ))^5\right) \leq \frac{1}{T^2},
    \end{equation*}
    where $C_R$ is a system-dependent constant.
    \label{thm:R_tail}
\end{corollary}

\begin{proof}
    The conclusion follows from invoking Theorem~\ref{thm:R_prob} with $\delta = 1 / T^2$.
\end{proof}

\begin{theorem}[\revise{Asymptotic, almost sure regret bound}]
    It holds almost surely that
    \begin{equation*}
        \mathcal{R}(T) = \tilde{\mathcal{O}}\left(\sqrt{T}\right).
    \end{equation*}
    \label{thm:main}
\end{theorem}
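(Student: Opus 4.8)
The plan is to deduce the almost sure bound directly from the polynomial tail estimate of Theorem~\ref{thm:R_tail} via the first Borel--Cantelli lemma. Since the regret $\mathcal{R}(T)$ in~\eqref{eq:regret} is a partial sum evaluated at the integer horizon $T$, it suffices to control $\mathcal{R}(T)$ along $T \in \mathbb{N}^*$; no interpolation to non-integer horizons is required.

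Concretely, I would fix the system-dependent constant $C_R$ and a threshold $T_1$ beyond which Theorem~\ref{thm:R_tail} applies, and set
\[
    \mathcal{A}_T := \left\{ \mathcal{R}(T) \geq C_R \sqrt{T}\,(\log T)^3 \right\}, \qquad T \geq T_1 .
\]
Theorem~\ref{thm:R_tail} gives $\mathbb{P}(\mathcal{A}_T) \leq T^{-2}$, hence $\sum_{T \geq T_1} \mathbb{P}(\mathcal{A}_T) \leq \sum_{T \geq T_1} T^{-2} < \infty$. By Borel--Cantelli, $\mathbb{P}\!\left(\limsup_{T} \mathcal{A}_T\right) = 0$, so on an event of probability one only finitely many of the $\mathcal{A}_T$ occur; equivalently, there is an a.s.\ finite random index $T^\star$ with $\mathcal{R}(T) < C_R \sqrt{T}\,(\log T)^3$ for all integers $T \geq T^\star$. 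Consequently, on that probability-one event, $\limsup_{T\to\infty} \mathcal{R}(T)/\bigl(\sqrt{T}(\log T)^3\bigr) \leq C_R < \infty$, which by the definitions of $\mathcal{O}(\cdot)$ and $\tilde{\mathcal{O}}(\cdot)$ for random functions in the Notations section (with $\alpha = 3$) is exactly $\mathcal{R}(T) = \tilde{\mathcal{O}}(\sqrt{T})$ almost surely.

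I do not anticipate a genuine obstacle in this final step: the substantive work lies upstream, in Theorem~\ref{thm:R_prob} and hence Theorem~\ref{thm:R_tail}, whose proofs must combine the three-stage decomposition and the concentration events of Theorem~\ref{thm:prop}. The only minor points to be careful about here are (i) that the bound is asserted for all sufficiently large integer $T$ rather than merely along a subsequence, which is precisely what Borel--Cantelli furnishes; and (ii) that $\mathcal{R}(T)$ need not be pathwise nonnegative, but since the $\tilde{\mathcal{O}}$ claim concerns only the upper side of the $\limsup$ this causes no difficulty (one may, if desired, run the same argument with $\max\{\mathcal{R}(T),0\}$ in place of $\mathcal{R}(T)$).
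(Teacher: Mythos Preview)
Your proposal is correct and follows essentially the same route as the paper: invoke Theorem~\ref{thm:R_tail} to obtain a summable tail bound $\mathbb{P}(\mathcal{A}_T)\le T^{-2}$, apply the first Borel--Cantelli lemma, and conclude $\mathcal{R}(T)=\tilde{\mathcal{O}}(\sqrt{T})$ almost surely. Your write-up is somewhat more explicit about the random threshold $T^\star$ and the sign of $\mathcal{R}(T)$, but there is no substantive difference in method.
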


\begin{proof}
    By Theorem~\ref{thm:R_tail}, we have
    \begin{equation*}
        \sum_{T = 1}^\infty \mathbb{P}\left( \mathcal{R}(T) \geq C_R \sqrt{T}(\log ( T ))^5\right) < +\infty,
    \end{equation*}
    \revise{i.e., the sum of probabilities of the events $\left\{ \mathcal{R}(T) \geq C_R \sqrt{T}(\log ( T ))^5 \right\}$ is finite.
    By Borel-Cantelli lemma, it holds almost surely that the aforementioned event occurs finitely often,}
    i.e.,
    \begin{equation*}
        \mathcal{R}(T) = \tilde{\mathcal{O}}\left(\sqrt{T}\right), \quad \text{a.s.} \qedhere
    \end{equation*}
\end{proof}

\begin{remark}
    \revisethree{The controllability of estimated system matrices $(\hat{A}_k, \hat{B}_k)$ does not affect the almost sure regret bound. The set of uncontrollable $(A, B)$ pairs forms a proper algebraic variety, which has Lebesgue measure zero~\cite{dion2003generic}. According to~\eqref{eq:ols} and the presence of Gaussian noise, the random matrices $(\hat{A}_k, \hat{B}_k)$ have a distribution that is absolutely continuous with respect to the Lebesgue measure on the space of all matrix pairs. Therefore, for any fixed $k$, the probability of $(\hat{A}_k, \hat{B}_k)$ being uncontrollable is zero. By the countable additivity of probability measures, this extends to all $k$ simultaneously, ensuring that uncontrollability events do not affect our almost sure results.
    Furthermore, as shown in the proof of Proposition~\ref{prop:regret_terms}, on the high-probability event $\mathcal{E}_{\text{est}}$, $(\hat{A}_k, \hat{B}_k)$ converges to $(A, B)$, which implies that $(\hat{A}_k, \hat{B}_k)$ is controllable for sufficiently large $k$. This provides a more formal alternative to the Lebesgue measure argument, demonstrating that the potential uncontrollability of $(\hat{A}_k, \hat{B}_k)$ for some $k$ does not affect our almost sure regret bound.
    }
    \label{rem:controllability}
\end{remark}

\subsection{\revise{Extension to open-loop unstable systems}}
\label{sec:unstable}

\revisetwo{
    This subsection extends the main results to the case where the system is open-loop unstable, but pre-stabilized by a known feedback gain $K_0$}, i.e., $\rho(A) \geq 1$ but $\rho(A+BK_0) < 1$, which is a common assumption in the literature~\cite{simchowitz2020naive,wang2021exact}. Such $K_0$ can be classical controllers commonly used in engineering practice~\cite{sukede2015auto,somefun2021dilemma}, or can result from adaptive or learning-based stabilization~\cite{maartensson1986adaptive,faradonbeh2018finite,hu2022sample}.

\revise{
    In this case, the system can be rewritten as:
    \begin{equation}
        x_{k+1} = A' x_k + B u_k' + w_k,
        \label{eq:K0_sys}
    \end{equation}
    where \( A' = A+BK_0, u_k' = u_k - K_0 x_k \).
    It follows that the optimal control law for the system~\eqref{eq:K0_sys} is \( u_k' = (K^*-K_0) x_k \), where $K^*$ is the optimal control gain~\eqref{eq:K} determined by $(A, B, Q, R)$.
    Therefore, Algorithm~\ref{alg:main} applied to the system~\eqref{eq:K0_sys} will lead to the control law
    \begin{equation*}
        u_k' = \begin{cases}
            (\hat{K}_k - K_0) x_k + u_k^{pr}, & \text{if circuit breaking is not active}, \\
            u_k^{pr}, & \text{otherwise},
        \end{cases}
    \end{equation*}
    where $\hat{K}_k$ is certainty equivalent gain determined by $(\hat{A}, \hat{B}, Q, R)$, $u_k^{pr}$ is the probing noise, and similarly to Lines~\ref{line:switching_start}-\ref{line:switching_end} in Algorithm~\ref{alg:main}, circuit breaking is triggered when $\max\{ \| (\hat{K}_k - K_0) x_k \|, \| \hat{K}_k - K_0 \| \} > M_k$ and lasts for $t_k$ steps.
}

\revise{
    To show that the proposed regret bound still holds, define $X_k = [x_k^\top\; x_{k-1}^\top\; u_{k-1}'^\top]^\top$. It follows that
    \begin{equation*}
        X_{k+1} = \begin{bmatrix}
            A' & 0 & 0 \\
            I & 0 & 0 \\
            0 & 0 & 0
        \end{bmatrix} X_k + \begin{bmatrix}
            B \\
            0 \\
            I
        \end{bmatrix} u_k' + \begin{bmatrix}
            I \\
            0 \\
            0
        \end{bmatrix} w_k,
    \end{equation*}
    and the stepwise cost can be equivalently written as
    \begin{align*}
        &x_k^\top Q x_k + u_k^\top R u_k = \\
        & \qquad X_{k+1}^\top \begin{bmatrix}
            0 & 0 & 0 \\
            0 & Q+K_0^\top R K_0 & K_0^\top R \\
            0 & R K_0 & R
        \end{bmatrix}X_{k+1} + u_k'^\top 0_{m \times m} u_k'.
    \end{align*}
    The problem is then reduced to the case of open-loop stable system with standard LQR cost. Therefore, the proposed almost surely $\mathcal{\tilde{O}}(\sqrt{T})$ regret still holds.
}

\section{Simulation}
\label{sec:simulation}

This section empirically validates the proposed controller and its almost sure regret bound using numerical examples.

\subsection{\revise{Result on an open-loop stable system}}
\label{sec:tep}

\revise{
In this subsection, the proposed controller is tested on the Tennessee Eastman Process (TEP)~\cite{downs1993plant}.}
In particular, we consider a simplified version of TEP similar to the one in~\cite{liu2020online}, with full state feedback.
The system is open-loop stable \revisetwo{with a state dimension  of \( n = 8 \) and an input dimension of \( m = 4 \). The process noise follows \( w_k \stackrel{\text { i.i.d. }}{\sim} \mathcal{N}(0, 16I_n) \).}
The weight matrices of LQR are chosen to be \( Q = I_n \) and \( R = I_m \).
The plant under the proposed controller is simulated for 10000 independent trials, each with \revise{$10^9$} steps.
As mentioned in Remark~\ref{rem:log_often}, the certainty equivalent gain \( \hat{K}_k \) is updated only at steps \( k = 2^i, i \in \mathbb{N}^* \) for the sake of fast computation.

The evolution of regret against time is plotted in Fig.~\ref{fig:avg_regret}. \revisetwo{To facilitate observation}, we plot the \emph{relative average regret} \( \mathcal{R}(T) / (TJ^*)\) against the total time step \( T \), where \( J^* \) is the optimal cost. Fig.~\ref{fig:avg_regret} shows 5 among the 10000 trials, from which \revisetwo{a \( 1 / \sqrt{T} \) convergence rate of the relative average regret is observed (i.e., a 1 order-of-magnitude increase in \( T \) corresponds to a 0.5 order-of-magnitude decrease in \( \mathcal{R}(T) / (TJ^*) \)), matching the theoretical \( \sqrt{T} \) growth rate of regret.}
\revise{To examine statistical properties, we sorted the trials by the average regret at the last step, and plotted the worst, median and mean cases in Fig.~\ref{fig:minmax}.} One can observe that the average regret converge to zero even in the worst case, which validates the almost-sure guarantee in Theorem~\ref{thm:main}. \revise{The $\sqrt{T}$-rate is also empirically validated by Fig.~\ref{fig:minmax_normalized}, which shows that $\mathcal{R}(T)/\sqrt{T}$ does not explode over time.}

\begin{figure}[!htbp]
    \centering
    \begin{subfigure}{\columnwidth}
        \centering
  \tikzsetnextfilename{figures/fig_samples.tex}%
  \input{figures/fig_samples.tex}%

        \caption{Five random sample paths}
        \label{fig:sample}
    \end{subfigure}
    \begin{subfigure}{\columnwidth}
        \centering
  \tikzsetnextfilename{figures/fig_minmax.tex}%
  \input{figures/fig_minmax.tex}%

        \caption{Worst, median and best cases among all sample paths \revise{(sorted according to the average regret at the last time step)}}
        \label{fig:minmax}
    \end{subfigure}
    \caption{Double-log plot of average regret against time step}
    \label{fig:avg_regret}
\end{figure}

\begin{figure}[!htbp]
    \centering
  \tikzsetnextfilename{figures/fig_minmax_normalized.tex}%
  \input{figures/fig_minmax_normalized.tex}%

    \vspace{-0.5cm}
    \caption{\revise{Regret divided by $\sqrt{T}$ against time step}}
    \label{fig:minmax_normalized}
\end{figure}

\revisetwo{
A key insight is that the circuit-breaking mechanism is triggered only finitely, with the time of the last trigger \( T_{\text{nocb}} \) exhibiting a super-polynomial tail, as stated with Corollary~\ref{col:time_tail}.
This is consistent with the observations}: among all the 10000 trials, circuit-breaking is never triggered after step \( 1.4\times 10^6 \), and a histogram of \( T_{\text{nocb}} \) is shown in Fig.~\ref{fig:Tnocb}, from which one can observe that the empirical distribution of \( T_{\text{nocb}} \) has a fast decaying tail.

\begin{figure}[!htbp]
    \centering
  \tikzsetnextfilename{figures/fig_Tnocb.tex}%
  \input{figures/fig_Tnocb.tex}%

    \caption{Histogram of \( T_{\text{nocb}} \) among all sample paths}
    \label{fig:Tnocb}
\end{figure}

\subsection{\revise{Result on an open-loop unstable system}}
\label{sec:unstable_example}

\revisetwo{
    We also tested the cart-pole inverted pendulum~\cite{geva1993cartpole} system, an open-loop unstable example. The system was linearized around the origin and discretized with time step $0.1\mathrm{s}$, resulting in a state dimension of $n=4$ and input dimension of $m=1$. The process noise follows $w_k \stackrel{\text { i.i.d. }}{\sim} \mathcal{N}(0, 0.02I_n)$. The system is pre-stabilized with a manually crafted gain $K_0 = \begin{bmatrix} 1 & 1 & 20 & 2 \end{bmatrix}$. Other experiments settings are the same as the TEP example in subsection~\ref{sec:tep}.
}

\revise{
    The relative average regret is shown in Fig.~\ref{fig:unstable_sample}. A similar convergence pattern as in the TEP example (Fig.~\ref{fig:sample}) is observed, which validates the claim in Section~\ref{sec:unstable} that the proposed results also hold for pre-stabilized open-loop unstable systems. Similar phenomenon as in Fig.~\ref{fig:minmax}, Fig.~\ref{fig:minmax_normalized} and Fig.~\ref{fig:Tnocb} are also observed in the cart-pole example, which are omitted for brevity.
}

\begin{figure}[!htbp]
    \centering
  \tikzsetnextfilename{figures/fig_samples_unstable.tex}%
  \input{figures/fig_samples_unstable.tex}%

    \caption{\revise{Double-log plot of average regret against time step in open-loop unstable cart-pole system. Five random sample paths are shown.}}
    \label{fig:unstable_sample}
\end{figure}

\section{Conclusion}
\label{sec:conclusion}

\revisetwo{
This paper proposes an adaptive LQR controller capable of achieving \( \tilde{\mathcal{O}}(\sqrt{T}) \) regret almost surely.
A key aspect of the controller design is a circuit-breaking mechanism, which ensures the convergence of the parameter estimate while being triggered only finitely often, thus having negligible effect on asymptotic performance.
}

\revisetwo{
Future research directions include: i) extending the circuit-breaking mechanism to the partially observed LQG setting, and ii) considering more general process noise distributions, such as sub-Weibull distributions~\cite{vladimirova2020sub}. This analysis could build on identification bounds in~\cite{faradonbeh2020input} and switching costs in~\cite{lu2022safe}.
}

\bibliographystyle{IEEEtran}
\bibliography{ref.bib}

\appendices

\section{Proof of Theorem~\ref{thm:prop}}
\label{app:proof_prop}

\subsection{Proof of Theorem~\ref{thm:prop}, item~\ref{item:Enoise})}

\begin{proof}
    Since $w_k \sim \mathcal{N}(0, I_n)$, it holds $\left\| w_k \right\|^2 \sim \chi^2(n)$. Applying the Chernoff bound, for any $a > 0$, and any $0 < t < 1/2$, it holds
    \begin{equation*}
        \mathbb{P}(X \geq a) \leq \mathbb{E}\left[\mathrm{e}^{t X} / \mathrm{e}^{t a}\right]=(1-2 t)^{-n / 2} \exp (-t a).
    \end{equation*}
    Choosing $t = 1/4$, we have
    \begin{equation}
        \mathbb{P}(\|w_k\| \geq a) \leq 2^{n/2} \exp(-a^2 / 4)
        \label{eq:w_tail}
    \end{equation}
    for any $k \in \mathbb{N}^*$ and $a > 0$.
    Invoking~\eqref{eq:w_tail} with $a_k = 2(\log(ck^2 / \delta))^{1/2}$, where $c = 2^{n/2}\pi^2 / 6$, we have
    \begin{equation*}
        \sum_{k=1}^\infty \mathbb{P}(\| w_k \| \geq a_k) \leq \sum_{k=1}^\infty 2^{n/2}c^{-1}\delta / k^2 = \delta,
    \end{equation*}
    i.e., it holds with probability at least $1 - \delta$ that
    \begin{equation}
        \| w_k \| \leq 2(\log(ck^2/\delta))^{1/2} \leq 2\sqrt{n+1}\sqrt{\log(k / \delta)}, \forall k \in \mathbb{N}^*.
        \label{eq:w_uni}
    \end{equation}
    Similarly, it also holds with probability at least $1 - \delta$ that
    \begin{equation}
        \| v_k \|  \leq 2\sqrt{\revise{m}+1}\sqrt{\log(k / \delta)}, \forall k \in \mathbb{N}^*.
        \label{eq:v_uni}
    \end{equation}
    Combining~\eqref{eq:w_uni} and~\eqref{eq:v_uni} leads to the conclusion.
\end{proof}

\subsection{Proof of Theorem~\ref{thm:prop}, item~\ref{item:Ecov})}

We start with a concentration bound on the sum of product of Gaussian random variables:

\begin{lemma}
    Let $X_i \stackrel{\text { i.i.d. }}{\sim} \mathcal{N} (0, 1)$, $Y_i \stackrel{\text { i.i.d. }}{\sim} \mathcal{N} (0, 1)$, and $\{ X_i \}, \{ Y_i \}$ be mutually independent, then
    \begin{enumerate}
        \item With probability at least $1 - 4\delta$,
        \begin{equation}
            \left| \sum_{i=1}^k X_i^2 - k \right| \leq 7\sqrt{k}\log(k / \delta), \forall k \in \mathbb{N}^*.
            \label{eq:sum_x2}
        \end{equation}
        \label{item:sum_x2}

        \item With probability at least $1 - 8\delta$,
        \begin{equation}
            \left| \sum_{i=1}^k X_iY_i \right| \leq 5\sqrt{k}\log(k / \delta), \forall k \in \mathbb{N}^*.
            \label{eq:sum_xy}
        \end{equation}
        \label{item:sum_xy}
    \end{enumerate}
    \label{lemma:subexp}
\end{lemma}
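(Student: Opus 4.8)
The plan is to establish both items via standard sub-exponential tail bounds combined with a union bound over $k \in \mathbb{N}^*$ (a ``stitching'' argument, as in the proof of item~\ref{item:Enoise})). For item~\ref{item:sum_x2}), note that $\sum_{i=1}^k X_i^2 - k$ is a centered $\chi^2(k)$ variable, which is sub-exponential: by the Bernstein-type bound (see e.g. Laurent--Massart), for any $t > 0$,
\begin{equation}
    \mathbb{P}\left( \left| \textstyle\sum_{i=1}^k X_i^2 - k \right| \geq 2\sqrt{kt} + 2t \right) \leq 2\exp(-t).
    \label{eq:lm}
\end{equation}
First I would set $t = t_k := \log(c k^2 / \delta)$ for a suitable absolute constant $c$ (chosen so that $\sum_k 2 c^{-1}\delta/k^2 \le 4\delta$, e.g. absorbing the $\pi^2/6$ factor), so that $\sum_{k=1}^\infty \mathbb{P}(\cdot) \le 4\delta$. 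It then remains to check the deterministic inequality $2\sqrt{k t_k} + 2 t_k \le 7\sqrt{k}\log(k/\delta)$ for all $k \in \mathbb{N}^*$; since $t_k \lesssim \log(k/\delta)$ and, crucially, $t_k \le \sqrt{k}\,\log(k/\delta)$ once one notes $\log(ck^2/\delta) \le 2\log(k/\delta) + \log c$ and bounds the small-$k$ cases directly, the constant $7$ should comfortably absorb $2 + 2 = 4$ plus lower-order slack. This is the one place requiring a little care: the bound must hold \emph{uniformly} including small $k$, so I would verify $k=1,2,\dots$ up to a threshold by hand and use monotonicity beyond.

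For item~\ref{item:sum_xy}), the summands $X_i Y_i$ are products of independent standard Gaussians, hence sub-exponential with $\|X_i Y_i\|_{\psi_1} \lesssim 1$; equivalently, conditioning on $\{Y_i\}$, the sum $\sum_i X_i Y_i$ is Gaussian with variance $\sum_i Y_i^2$, which concentrates near $k$ by item~\ref{item:sum_x2}). I would exploit exactly this conditional structure: on the event of~\eqref{eq:sum_x2} (probability $\ge 1 - 4\delta$), we have $\sum_i Y_i^2 \le k + 7\sqrt{k}\log(k/\delta) \le 8k\log(k/\delta)$ say, for $k$ large and again checking small $k$. Conditionally on $\{Y_i\}$, a Gaussian tail bound plus a union bound over $k$ (another factor-$4\delta$ budget, using $a_k = \sqrt{2 \big(\sum_i Y_i^2\big)\log(ck^2/\delta)}$) gives $|\sum_i X_i Y_i| \le \sqrt{2 \big(\sum_i Y_i^2\big) \log(ck^2/\delta)}$ for all $k$ with probability $\ge 1 - 4\delta$; combining with the bound on $\sum_i Y_i^2$ yields $|\sum_i X_i Y_i| \lesssim \sqrt{k}\log(k/\delta)$, and tightening the constants gives the claimed $5\sqrt{k}\log(k/\delta)$. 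The total failure probability is $4\delta + 4\delta = 8\delta$, matching the statement.

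The main obstacle I anticipate is purely bookkeeping rather than conceptual: getting the numerical constants ($7$ and $5$) to come out exactly, uniformly over all $k \ge 1$ rather than just asymptotically, since the logarithmic factor $\log(ck^2/\delta)$ versus $\log(k/\delta)$ and the additive-versus-multiplicative structure of the Bernstein bound both introduce slack that must be tracked. The cleanest route is to split each sum into a ``bulk'' regime $k \ge k_\star$ (some absolute constant) where the square-root term dominates and the asserted constant is loose, and a ``small $k$'' regime handled by a crude pointwise bound together with the fact that $\log(k/\delta) \ge \log 2$ is bounded below. No new ideas beyond sub-exponential concentration and the union-bound-with-$1/k^2$ trick already used in item~\ref{item:Enoise}) are needed.
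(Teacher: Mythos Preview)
Your proof of item~\ref{item:sum_x2}) matches the paper's almost exactly: both invoke the Laurent--Massart $\chi^2$ tail bound at level $a = \log(k^2/\delta)$ and take a union bound over $k \in \mathbb{N}^*$ via $\sum_k 1/k^2$.

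For item~\ref{item:sum_xy}), your approach is correct but takes a genuinely different route from the paper's. You condition on $\{Y_i\}$, use that $\sum_i X_i Y_i$ is then Gaussian with variance $\sum_i Y_i^2$, bound that variance via item~\ref{item:sum_x2}) applied to $\{Y_i\}$, and finish with a Gaussian tail bound plus a second union bound over $k$. The paper instead uses the polarization identity $X_i Y_i = \tfrac{1}{4}\big[(X_i+Y_i)^2 - (X_i-Y_i)^2\big]$, observes that $(X_i \pm Y_i)/\sqrt{2}$ are each standard normal, and reduces item~\ref{item:sum_xy}) directly to two applications of item~\ref{item:sum_x2}) (one for each sign), producing the same $4\delta + 4\delta = 8\delta$ budget. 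The paper's route is shorter and sidesteps exactly the conditioning-plus-substitution constant-chasing you flag as the main obstacle; your route, on the other hand, extends more readily to situations where the polarization trick is unavailable (e.g., non-identically-distributed or dependent $Y_i$).
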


\begin{proof}
    Since $\sum_{i=1}^k X_i^2 \sim \chi^2(k)$, according to~\cite[Lemma 1]{laurent2000adaptive}, for any $a > 0$ and any $k\in \mathbb{N}^*$, it holds
    \begin{equation*}
        \mathbb{P}\left( \left| \sum_{i=1}^k X_i^2 - k \right| \geq 2\sqrt{\revise{k}a} + 2a \right) \leq 2\exp(-a).
    \end{equation*}
    Fix $k$ and choose $a = \log(k^2 / \delta)$, and it follows that with probability at least $1 - 2\delta / k^2$,
    \begin{align}
        \left| \sum_{i=1}^k X_i^2 - k \right| & \leq 2\sqrt{k\log(k^2 / \delta)} + 2 \log(k^2 / \delta) \nonumber \\
        & \leq 7\sqrt{k}\log(k / \delta).
    \end{align}
    Taking the union bound over $k\in \mathbb{N}^*$, one can show that~\eqref{eq:sum_x2} holds with probability at least $1 - 2\delta \sum_{k=1}^\infty (1/k^2) > 1 - 4\delta$, and hence claim~\ref{item:sum_x2}) is proved.

    Since $X_i Y_i = (X_i + Y_i)^2 / 4 + (X_i - Y_i)^2 / 4$, and $X_i + Y_i \stackrel{\text { i.i.d. }}{\sim} \mathcal{N} (0, 2)$, $X_i - Y_i \stackrel{\text { i.i.d. }}{\sim} \mathcal{N} (0, 2)$, claim~\ref{item:sum_xy}) follows from applying claim~\ref{item:sum_x2}) to $\{ (X_i + Y_i) / \sqrt{2} \}$ and $\{ (X_i - Y_i) / \sqrt{2} \}$ respectively and taking the union bound.
\end{proof}

Theorem~\ref{thm:prop}, item~\ref{item:Ecov}) follows from the above lemma:

\begin{proof}
    Applying Lemma~\ref{lemma:subexp}, item~\ref{item:sum_x2}) to the diagonal elements, and Lemma~\ref{lemma:subexp}, item~\ref{item:sum_xy}) to the off-diagonal elements of $\sum_{i=1}^k (w_i w_i^\top - I)$, and taking the union bound, one can show that with probability at least $1 - 8n^2\delta$,
    \begin{equation*}
        \sum_{i=1}^k (w_i w_i^\top - I_n) \leq 7\sqrt{k} \log(k / \delta),
    \end{equation*}
    where the inequality holds component-wise. Hence, with probability at least $1 - 8n^2\delta$,
    \begin{align}
        \left\| \sum_{i=1}^k (w_i w_i^\top - I_n) \right\|^2 & \leq \left\| \sum_{i=1}^k (w_i w_i^\top - I_n) \right\|_F^2 \nonumber \\
        & \leq n^2(7\sqrt{k} \log(k / \delta)),
    \end{align}
    and scaling the failure probability leads to the conclusion.
\end{proof}

\subsection{Proof of Theorem~\ref{thm:prop}, item~\ref{item:xnorm})}

\begin{proof}
    Notice
    \begin{equation}
        x_k = A^{k-2} d_1 + A^{k-3} d_2 + \cdots + d_{k-1},
        \label{eq:state_decomp}
    \end{equation}
    where $d_k = B(u^{cb}_k + u^{pr}_k) + w_k$. On $\mathcal{E}_{\text{noise}}(\delta)$, it holds
    \begin{align}
        \| d_k \| & \leq \| B \| \log(k) + 2(\| B \| + 1)\revise{\sqrt{\bar{n}\log(k / \delta)}} \nonumber \\
        & \leq (\| B \| + 1)\revise{(2\sqrt{\bar{n}}+1)}\log(k / \delta).
    \end{align}
    Furthermore, from~\eqref{eq:P0} and~\eqref{eq:state_decomp}, it holds
    \begin{align}
        \| x_k \|_{\revise{P_0}} & \leq \rho_0^{(k-2)/2}\| d_1 \|_{\revise{P_0}} + \rho_0^{(k-3)/2}\| d_2 \|_{\revise{P_0}} + \cdots + \| d_{k-1} \|_{\revise{P_0}} \nonumber \\
        & \leq \frac{1}{1 - \rho_0^{1/2}} \|d_k \|_{\revise{P_0}},
        \label{eq:xk_bound}
    \end{align}
    from which the conclusion follows.
\end{proof}

\subsection{Proof of Theorem~\ref{thm:prop}, item~\ref{item:Ecross})}

This result is a corollary of a time-uniform version of Azuma-Hoeffding inequality~\cite{azuma1967weighted}, stated below:

\begin{lemma}
    Let $\{ \phi_k \}_{k\geq 1}$ be a martingale difference sequence adapted to the filtration $\{ \mathcal{F}_k \}$ satisfying $| \phi_k | \leq d_k$ a.s., then with probability at least $1 - 4\delta$, it holds
    \begin{equation}
        \left| \sum_{i=1}^k  \phi_i \right| \leq 2\sqrt{\sum_{i=1}^k d_i^2 \log(k / \delta)}, \forall k.
        \label{eq:azuma}
    \end{equation}
    \label{lemma:azuma}
\end{lemma}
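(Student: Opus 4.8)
The plan is to reduce the time-uniform statement to a sequence of fixed-horizon Azuma–Hoeffding bounds via a union bound over dyadic blocks, paying only a logarithmic price. First I would recall the standard (fixed $k$) Azuma–Hoeffding inequality: for a martingale difference sequence $\{\phi_i\}$ adapted to $\{\mathcal{F}_i\}$ with $|\phi_i| \le d_i$ a.s., and any fixed $k$,
\begin{equation}
    \mathbb{P}\left( \left| \sum_{i=1}^k \phi_i \right| \ge \epsilon \right) \le 2\exp\left( - \frac{\epsilon^2}{2\sum_{i=1}^k d_i^2} \right).
\end{equation}
Naively taking a union bound over all $k \in \mathbb{N}^*$ would require the per-$k$ failure probability to be summable, e.g. choosing $\epsilon_k$ so that the right-hand side is $\lesssim \delta/k^2$, which gives $\epsilon_k = \sqrt{2\sum_{i=1}^k d_i^2 \log(2k^2/\delta)}$; since $\log(2k^2/\delta) \le 2\log(k/\delta) + \log 2 \lesssim \log(k/\delta)$ for $\delta$ small, the constant $2$ in front of the square root in~\eqref{eq:azuma} absorbs this, and summing $\sum_k 2\delta/k^2 = 2\delta\pi^2/6 < 4\delta$ yields the claim directly. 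This is in fact the same device already used in the proofs of items~\ref{item:Enoise}) and~\ref{item:Ecov}), so I would mirror that argument.

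More carefully, the key steps in order are: (i) state the fixed-horizon Azuma–Hoeffding bound (citing \cite{azuma1967weighted}); (ii) for each $k$, apply it with the threshold $\epsilon_k := 2\sqrt{\sum_{i=1}^k d_i^2 \log(k/\delta)}$, checking that the exponent $\epsilon_k^2 / (2\sum_{i=1}^k d_i^2) = 2\log(k/\delta) \ge \log(k^2/\delta)$ for $\delta \le 1$, so that $\mathbb{P}(|\sum_{i=1}^k \phi_i| \ge \epsilon_k) \le 2\exp(-\log(k^2/\delta)) = 2\delta/k^2$; (iii) take the union bound over $k \in \mathbb{N}^*$, obtaining total failure probability at most $2\delta \sum_{k\ge 1} k^{-2} = \delta\pi^2/3 < 4\delta$; (iv) conclude that on the complement, \eqref{eq:azuma} holds simultaneously for all $k$. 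One subtlety worth flagging: for $k=1$ the factor $\log(1/\delta)$ must be positive, which holds since $\delta < 1$ in all regimes of interest; if one wants the bound literally for all $k$ including small $k$ with $\delta$ close to $1$, one can instead use $\log((k+1)/\delta)$ or restrict to $\delta \le 1/2$ as elsewhere in the paper, at no cost to the stated form.

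The main obstacle is essentially bookkeeping rather than conceptual: ensuring the numerical constant $2$ in front of $\sqrt{\sum d_i^2 \log(k/\delta)}$ genuinely suffices to absorb the $\log(k^2/\delta)$ that arises from making the per-$k$ failure probabilities summable. The computation above shows it does — the point is that $\log(k^2/\delta) = 2\log k - \log\delta \le 2(\log k - \log\delta) = 2\log(k/\delta)$ precisely because $-\log\delta \ge 0$ — so the factor $4$ in "$1-4\delta$" and the factor $2$ in the bound are exactly matched to this dyadic/summability trade-off. No martingale-specific difficulty arises beyond the classical Azuma–Hoeffding estimate itself, which is applied as a black box.
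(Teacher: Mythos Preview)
Your proposal is correct and follows essentially the same approach as the paper: apply the fixed-horizon Azuma--Hoeffding inequality with per-$k$ failure probability $2\delta/k^2$, use $\log(k^2/\delta)\le 2\log(k/\delta)$ to absorb the factor into the constant $2$, and union-bound over $k\in\mathbb{N}^*$ via $\sum_{k\ge 1}k^{-2}=\pi^2/6<2$ to arrive at $1-4\delta$. The ``dyadic blocks'' mentioned in your opening sentence are not actually used (nor needed), but your detailed steps (i)--(iv) match the paper's proof line for line.
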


\begin{proof}
    By Azuma-Hoeffding inequality~\cite{azuma1967weighted}, for a fixed $k$, it holds with probability at least $1 - 2\delta / k^2$ that
    \begin{equation}
        \left| \sum_{i=1}^k  \phi_i \right| \leq \sqrt{2\sum_{i=1}^k d_i^2 \log(k^2 / \delta)} \leq 2\sqrt{\sum_{i=1}^k d_i^2 \log(k / \delta)}.
        \label{eq:azuma_1}
    \end{equation}
    Taking the union bound over $k\in \mathbb{N}^*$, one can prove that~\eqref{eq:azuma} holds with probability at least $1 - 2\delta \sum_{k=1}^\infty (1/k^2) > 1 - 4\delta$.
\end{proof}

Theorem~\ref{thm:prop}, item~\ref{item:Ecross}) follows from the above lemma:

\begin{proof}
    According to Theorem~\ref{thm:prop}, item~\ref{item:Enoise}), we only need to prove
    \begin{equation}
        \mathbb{P}( \mathcal{E}_{\text{cross}}(\delta) \mid \mathcal{E}_{\text{noise}}(\delta)) \geq 1 - 12\delta,
        \label{eq:prob_cross}
    \end{equation}
    \revise{since when~\eqref{eq:prob_cross} holds, we have $\mathbb{P}(\bar{\mathcal{E}}_{\text{cross}}(\delta))= \mathbb{P}(\bar{\mathcal{E}}_{\text{cross}}(\delta) \mid \mathcal{E}_{\text{noise}}(\delta)) \mathbb{P}(\mathcal{E}_{\text{noise}}(\delta)) + \mathbb{P}(\bar{\mathcal{E}}_{\text{cross}}(\delta) \mid \bar{\mathcal{E}}_{\text{noise}}(\delta))\mathbb{P}(\bar{\mathcal{E}}_{\text{noise}}(\delta)) \leq \mathbb{P}(\bar{\mathcal{E}}_{\text{cross}}(\delta) \mid \mathcal{E}_{\text{noise}}(\delta)) + \mathbb{P}(\bar{\mathcal{E}}_{\text{noise}}(\delta)) \leq 14\delta$.}
    Therefore, we condition the remainder of the proof upon the event $\mathcal{E}_{\text{noise}}(\delta)$.

    Let $\mathcal{F}_k$ be the $\sigma$-algebra generated by $v_1,\allowbreak w_1,\allowbreak v_2, w_2, \ldots,\allowbreak v_{k-1},\allowbreak w_{k-1}, v_k$.
    Since $x_k \in \mathcal{F}_{k-1}$, $u_k^{cb} \in \mathcal{F}_{k - 1}$, and $\mathbb{E}[w_k \mid \mathcal{F}_{k-1}] = 0, \mathbb{E}[v_k \mid \mathcal{F}_{k-1}]=0$ due to symmetry, it holds
    \begin{align}
        & \mathbb{E}\left[(w_k+B u_k^{pr})^\top P^* \left(A x_k + B u_k^{cb}\right) \big| \mathcal{F}_k\right] \nonumber \\
        = & \mathbb{E}\left[w_k + k^{-1/4}Bv_k \big| \mathcal{F}_{k-1}\right]^\top P^* \left(A x_k + B u_k^{cb}\right) = 0,
    \end{align}
    i.e., $\{ (w_k + Bu_k^{pr})^\top P^* (A x_k + B u_k^{cb}) \}$ is a martingale difference sequence adapted to the filtration $\{ \mathcal{F}_k \}$. Furthermore, by Theorem~\ref{thm:prop}, item~\ref{item:xnorm}), we have
    \begin{align}
        & \left\| (w_k + Bu_k^{pr})^\top P^* \left(A x_k + B u_k^{cb}\right) \right\| \nonumber \\
        \leq & (\| w_k \| + \| B \| \| v_k \|) \| P^* \| \left(\| A \| \| x_k \| + \| B \| \left\| u_k^{cb} \right\|\right) \nonumber \\
        \leq & \frac{1}{2} C_{\text{cross}} \left(\log(k / \delta)\right)^{3/2}.
    \end{align}
    Therefore, by applying Lemma~\ref{lemma:azuma}, it holds
    \begin{align}
        & \mathbb{P} \left(\left\| \sum_{i=1}^k (w_i + B u_i^{pr})^\top P^*(Ax_i + Bu^{cb}_i) \right\| \geq \right. \nonumber \\
        & \quad \left. \vphantom{\sum_i^i} C_{\text{cross}}\sqrt{k} (\log(k/\delta))^2, \forall k \in \mathbb{N}^* \right| \left. \mathcal{E}_{\text{noise}}(\delta) \vphantom{\sum_i^i} \right) \leq 4\delta.
        \label{eq:prob_cross_1}
    \end{align}
    Following a similar argument, it also holds
    \begin{align}
        & \mathbb{P} \left(\left\| \sum_{i=1}^k (u_i^{pr})^\top R u_i^{cb} \right\| \geq \right. \nonumber \\
        & \quad \left. \vphantom{\sum_i^i} C_{\text{cross}}\sqrt{k} (\log(k/\delta))^2, \forall k \in \mathbb{N}^* \right| \left. \mathcal{E}_{\text{noise}}(\delta) \vphantom{\sum_i^i} \right) \leq 4\delta,
        \label{eq:prob_cross_2}
    \end{align}
    and
    \begin{align}
        & \mathbb{P} \left(\left\| \sum_{i=1}^k w_i^\top P^* B u_i^{pr} \right\| \geq \right. \nonumber \\
        & \quad \left. \vphantom{\sum_i^i} C_{\text{cross}}\sqrt{k} (\log(k/\delta))^2, \forall k \in \mathbb{N}^* \right| \left. \mathcal{E}_{\text{noise}}(\delta) \vphantom{\sum_i^i} \right) \leq 4\delta.
        \label{eq:prob_cross_3}
    \end{align}
    Hence, the conclusion follows from combining~\eqref{eq:prob_cross_1}, \eqref{eq:prob_cross_2} and~\eqref{eq:prob_cross_3}.
\end{proof}

\subsection{Proof of Theorem~\ref{thm:prop}, item~\ref{item:Eest})}
\label{sec:Eest_proof}

This subsection is devoted to deriving the time-uniform upper bound on estimation error stated in Theorem~\ref{thm:prop}, item~\ref{item:Eest}).
Throughout this subsection, we denote $\Theta = [A\quad B]$, $\hat{\Theta}_k = [\hat{A}_k \quad \hat{B}_k]$, $z_k = [x_k^\top \quad u_k^\top]^\top$, and $V_k = \sum_{i=1}^{k - 2} z_i z_i^\top$.

The proof can be split into three parts: firstly, we characterize the estimation error $\| \hat{\Theta}_k - \Theta \|$ in terms of the maximum and minimum eigenvalues of the regressor covariance matrix $V_k$, using a result in martingale least squares~\cite{abbasi2011online}.
Secondly, an upper bound on the $\|V_k\|$, which is a consequence of the non-explosiveness of states, can be derived as \revise{a} corollary of Theorem~\ref{thm:prop}, item~\ref{item:xnorm}).
Finally, an upper bound on $\| V_k^{-1} \|$, or equivalently a lower bound on the minimum eigenvalue of $V_k$, which is a consequence of sufficient excitation of the system, can be proved using an anti-concentration bound on block martingale small-ball~(BMSB) processes~\cite{simchowitz2018learning}.
The three parts would be discussed respectively below.

\subsubsection{Upper bound on least squares error, in terms of $V_k$}

\newcommand{\abbasi}{\cite[Corollary~1 of Theorem~3]{abbasi2011online}}
\begin{lemma}[\abbasi]
    Let
    \begin{equation*}
        S_k = \sum_{i=1}^k \eta_i m_{i-1}, U_k = \sum_{i=1}^k m_{i-1}m_{i-1}^\top,
    \end{equation*}
    where $\{\mathcal{F}_k\}_{k\in  \mathbb{N}^*}$ is a filtration, $\{\eta_k\}_{k\in  \mathbb{N}^*}$ is a random scalar sequence with $\eta_k \mid \mathcal{F}_k$ being conditionally $\sigma^2$-sub-Gaussian, and $\{m_k\}_{k\in  \mathbb{N}^*}$ is a random vector sequence with $m_k \in \mathcal{F}_k$. Then with probability at least $1 - \delta$,
    \begin{align}
        & \|S_k\|_{(U_0+U_k)^{-1}} \leq 2\sigma^2 \cdot \nonumber \\
        & \quad \log\left(\det(U_0)^{-1/2} \det(U_0+U_k)^{1/2} / \delta\right), \quad \forall k \in \mathbb{N}^*,
    \end{align}
    where $U_0 \succ 0$ is an arbitrarily chosen constant positive semi-definite matrix.
    \label{lemma:abbasi}
\end{lemma}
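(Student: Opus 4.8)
The statement is the anytime self-normalized martingale concentration bound of \cite{abbasi2011online}, so for the present paper it suffices to cite it; the plan below is how one reconstructs its proof via the \emph{method of mixtures} (pseudo-maximization). First I would fix a vector $\lambda$ of the same dimension as the $m_k$'s and form the exponential process
\[
    M_k^\lambda = \exp\left( \frac{\langle \lambda, S_k\rangle}{\sigma} - \frac{1}{2}\lambda^\top U_k \lambda \right),
\]
which factorizes as the product over $i \leq k$ of the per-step terms $\exp\big( \eta_i \langle \lambda, m_{i-1}\rangle / \sigma - \langle \lambda, m_{i-1}\rangle^2 / 2 \big)$. Since $m_{i-1}$ is previsible and $\eta_i$ is conditionally $\sigma^2$-sub-Gaussian, each such factor has conditional expectation at most $1$, so $\{M_k^\lambda\}_{k \geq 0}$ is a nonnegative supermartingale with $M_0^\lambda = 1$.

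Next I would integrate $\lambda$ out against the Gaussian mixing measure $h = \mathcal{N}(0, U_0^{-1})$, setting $\bar M_k = \int M_k^\lambda \, dh(\lambda)$. By Tonelli's theorem this is again a nonnegative supermartingale with $\bar M_0 = 1$, and completing the square in the Gaussian integral evaluates it in closed form as
\[
    \bar M_k = \left( \frac{\det U_0}{\det(U_0 + U_k)} \right)^{1/2} \exp\left( \frac{1}{2\sigma^2}\,\|S_k\|_{(U_0+U_k)^{-1}}^2 \right).
\]
Applying Ville's maximal inequality gives $\mathbb{P}\big( \sup_k \bar M_k \geq 1/\delta \big) \leq \delta$; on the complementary event, taking logarithms in $\bar M_k < 1/\delta$ and rearranging yields, simultaneously for all $k$,
\[
    \|S_k\|_{(U_0+U_k)^{-1}}^2 \leq 2\sigma^2 \log\left( \frac{\det(U_0+U_k)^{1/2} \det(U_0)^{-1/2}}{\delta} \right),
\]
which is the claimed inequality.

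The step I expect to be the main obstacle is the anytime (``$\forall k$'') uniformity, together with the measurability and integrability needed to apply Tonelli's theorem and to ensure $\bar M_k$ is well defined: one either invokes Ville's inequality for nonnegative supermartingales directly, or introduces a stopping time $\tau$, passes to the stopped process $\bar M_{k \wedge \tau}$, and uses the convergence theorem for nonnegative supermartingales to bound $\bar M_\tau$ on $\{\tau = \infty\}$ before sending the threshold to $1/\delta$. The remaining ingredients — the sub-Gaussian bound on each factor, the Gaussian integral, and the final algebraic rearrangement — are routine.
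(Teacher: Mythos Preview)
The paper does not prove this lemma at all: it is stated with the attribution \cite[Corollary~1 of Theorem~3]{abbasi2011online} and immediately used as a black box in the proof of Proposition~\ref{prop:Els}. Your opening remark that ``for the present paper it suffices to cite it'' is therefore exactly right, and your reconstruction via the method of mixtures (exponential supermartingale in a fixed direction $\lambda$, Gaussian mixing over $\lambda \sim \mathcal{N}(0,U_0^{-1})$, closed-form evaluation of the mixture, then Ville's maximal inequality) is the standard proof from the cited reference and is correct. Note that your final display has the squared norm $\|S_k\|_{(U_0+U_k)^{-1}}^2$ on the left, which is the correct form; the paper's statement of the lemma omits the square, apparently a typo that is propagated into the proof of Proposition~\ref{prop:Els}.
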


\begin{proposition}
    With probability at least $1 - \delta$,
    \begin{align}
        & \left\| \hat{\Theta}_k - \Theta \right\|^2 \leq 2n \left\| V_k^{-1} \right\|\left[ \log \left( \frac{n}{\delta} \right) + \frac{n}{2} \log(1 + \| V_k \|) \right], \nonumber \\
        & \forall k \geq m + n + 2.
        \label{eq:Theta_bound}
    \end{align}
    \label{prop:Els}
\end{proposition}

\begin{proof}
    Let $\mathcal{F}_k$ be the $\sigma$-algebra generated by $v_1,\allowbreak w_1,\allowbreak v_2,\allowbreak w_2,\allowbreak \ldots,\allowbreak v_{k-1},\allowbreak w_{k-1}, v_k$.
    From $x_{k+1} = \Theta z_k + w_k$, we have
    \begin{equation*}
        \hat{\Theta}_k - \Theta = \left(\sum_{i=1}^{k-2} w_i z_{i}^\top\right)\left(\sum_{i=1}^{k-2} z_{i} z_{i}^\top\right)^\dag,
    \end{equation*}
    where $w_k | \mathcal{F}_{k} \sim \mathcal{N}(0, I_n), z_k \in \mathcal{F}_k$. With $V_k = \sum_{i=1}^{k-2} z_{i} z_{i}^\top$, we have $V_{\revise{k}}\succ 0$ a.s. for $k \geq m+n+2$.
    Now we can apply Lemma~\ref{lemma:abbasi} to each row of $\hat{\Theta}_k - \Theta$: for each of $j = 1,\ldots, n$, let $S_{j,k} = \sum_{i=1}^{k-2} (e_j^\top w_i) z_{i}$, where $e_j$ is the $j$-th standard unit vector.
    By invoking Lemma~\ref{lemma:abbasi} with $U_k = V_k$ and $U_0 = I_{m+n}$, we have: with probability at least $1 - \delta$,
    \begin{align}
        & \left\|e_j^\top(\hat{\Theta}_k - \Theta) \right\|^2 = \left\| V_k^{-1} S_{j,k} \right\|
        \leq \left\| V_k ^{-1} \right\| \| S_{j,k} \|_{(I+V_k)^{-1}} \nonumber \\
        & \leq 2\left\| V_k^{-1} \right\| \log\left( \det(I+V_k)^{1/2} / \delta\right) \nonumber \\
        & \leq 2 \left\| V_k^{-1} \right\| \left[ \log\left(\frac{1}{\delta}\right) + \frac{n}{2}\log(1 + \| V_k \|) \right].
    \end{align}
    Taking the union bound over $j = 1,\ldots,n$, we have: with probability at least $1 - n\delta$,
    \begin{align}
        & \left\| \hat{\Theta}_k - \Theta \right\|^2 \leq \left\| \hat{\Theta}_k - \Theta \right\|_F^2 \nonumber \\
        \leq & \sum_{j=1}^n \left\| e_j^\top (\hat{\Theta}_k - \Theta) \right\|^2 \nonumber \\
        \leq & 2n \left\| V_k^{-1} \right\| \left[ \log\left(\frac{1}{\delta}\right) + \frac{n}{2}\log(1 + \| V_k \|) \right].
        \label{eq:Theta_bound_1}
    \end{align}
    Scaling the failure probability results in the conclusion.
\end{proof}

\subsubsection{Upper bound on $\| V_k \|$}

\begin{proposition}
    On the event $\mathcal{E}_{\text{noise}}(\delta)$ defined in~\eqref{eq:Enoise}, it holds
    \begin{equation*}
        \| V_k \| \leq C_V k (\log(k / \delta))^2,
    \end{equation*}
    where $C_V$ is defined in~\eqref{eq:CV}.
    \label{prop:max_eig}
\end{proposition}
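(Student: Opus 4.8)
The plan is to use the crude but sufficient bound
$\|V_k\| = \bigl\| \sum_{i=1}^{k-1} z_i z_i^\top \bigr\| \le \sum_{i=1}^{k-1} \|z_i\|^2$,
and then to control each summand uniformly in $i$ on the event $\mathcal{E}_{\text{noise}}(\delta)$ by splitting $z_i = [x_i^\top\ u_i^\top]^\top$ into its state and input parts, so that $\|z_i\|^2 = \|x_i\|^2 + \|u_i\|^2$ and it suffices to bound $\|x_i\|$ and $\|u_i\|$ separately by $\mathcal{O}(\log(i/\delta))$.

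For the state part, item~\ref{item:xnorm}) of Theorem~\ref{thm:prop} already supplies $\|x_i\| \le C_x \log(i/\delta)$ on $\mathcal{E}_{\text{noise}}(\delta)$, so nothing new is needed there. For the input part, I would unfold $u_i = u_i^{cb} + u_i^{pr}$ from Algorithm~\ref{alg:main}. The key point is that $u_i^{cb}$ is bounded by construction, irrespective of the quality of the current estimate $\hat K_i$: either circuit-breaking is active, in which case $u_i^{cb} = 0$, or it is inactive, in which case $u_i^{cb} = u_i^{ce}$ was accepted only because $\|u_i^{ce}\| \le M_i = \log i$; in both cases $\|u_i^{cb}\| \le \log i \le \log(i/\delta)$. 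Meanwhile $\|u_i^{pr}\| = i^{-1/4}\|v_i\| \le \|v_i\| \le 2\sqrt{n+1}\sqrt{\log(i/\delta)}$ on $\mathcal{E}_{\text{noise}}(\delta)$. Combining, $\|u_i\| \lesssim (2\sqrt{n+1}+1)\log(i/\delta)$, where I absorb the $\sqrt{\log}$ into $\log$ at the cost of a harmless constant (equivalently, one may restrict to $\delta$ small, which is all that is used downstream).

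Putting the two pieces together gives $\|z_i\| \le \|x_i\| + \|u_i\| \le C_V \log(i/\delta)$ with $C_V$ as in~\eqref{eq:CV}, hence $\|z_i\|^2 \lesssim C_V^2 (\log(i/\delta))^2 \le C_V^2 (\log(k/\delta))^2$ for $i \le k$ by monotonicity of $\log$, and summing over $i = 1,\dots,k-1$ yields $\|V_k\| \lesssim C_V^2\, k\, (\log(k/\delta))^2$, which is the claimed bound (up to adjusting the constant in front). There is no real obstacle here: the only conceptual ingredient — and the reason this proposition belongs to the controller design and not merely to the noise model — is that the circuit-breaking threshold $M_k$ caps $\|u_i^{cb}\|$ even when the certainty-equivalent gain is destabilizing, so the regressors cannot blow up; the rest is the triangle inequality together with item~\ref{item:xnorm}). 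The only care required is bookkeeping the constants so the final expression matches the stated $C_V$, and a constant factor is in any case immaterial in all subsequent uses (it enters only through logarithms in Proposition~\ref{prop:Els}).
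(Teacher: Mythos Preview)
Your proposal is correct and essentially identical to the paper's proof: both use $\|V_k\|\le\sum_{i=1}^{k-1}\|z_i\|^2$, bound $\|x_i\|$ via item~\ref{item:xnorm}), bound $\|u_i\|$ by $\|u_i^{cb}\|\le M_i=\log i$ plus $\|u_i^{pr}\|\le 2\sqrt{n+1}\sqrt{\log(i/\delta)}$, and sum. The only discrepancy is in the constant (the paper writes $\|z_k\|\le\sqrt{C_V}\log(k/\delta)$ where you get $C_V\log(i/\delta)$), but as you note this is immaterial downstream, and indeed the paper's own constant bookkeeping here is loose.
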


\begin{proof}
    On $\mathcal{E}_{\text{noise}}(\delta)$, we have
    \begin{equation*}
        \| u_k \| \leq \log(k) + 2\revise{\sqrt{\bar{n}\log(k / \delta)}},
    \end{equation*}
    and by Theorem~\ref{thm:prop}, item~\ref{item:xnorm}), we have
    \begin{equation*}
        \| x_k \| \leq C_x\log(k / \delta).
    \end{equation*}
    Hence,
    \begin{equation*}
        \| z_k \| \leq \| x_k \| + \| u_k \| \leq \sqrt{C_V}\log(k / \delta),
    \end{equation*}
    which implies
    \begin{equation*}
        \| V_k \| \leq \sum_{i = 1}^{k-2}\| z_k \|^2 \leq C_V k(\log(k / \delta))^2.
    \end{equation*}
\end{proof}

\subsubsection{Upper bound on $\| V_k^{-1} \|$}

We shall borrow the techniques of analyzing BMSB processes from Simchowitz et al.~\cite{simchowitz2018learning} to bound $\| V_k^{-1} \|$. The BMSB process is defined as follows:

\newcommand{\bmsb}{\cite[Definition~2.1]{simchowitz2018learning}}
\begin{definition}[\bmsb]
    \revise{
    Suppose that $\left\{\phi_{i}\right\}_{i \in \mathbb{N}^*}$ is a real-valued stochastic process adapted to the filtration $\left\{\mathcal{F}_{i}\right\}$. We say the process $\left\{\phi_{i}\right\}$ satisfies the $(l, \nu, p)$ block martingale small-ball (BMSB) condition if:
    \begin{equation*}
        \frac{1}{l} \sum_{j=1}^{l} \mathbb{P}\left(\left|\phi_{i+j}\right| \geq \nu \mid \mathcal{F}_{i}\right) \geq p, \forall i \in \mathbb{N}^*.
    \end{equation*}
    }
    \label{def:bmsb}
\end{definition}

The following lemma verifies that $\{ z_k \}$, projected along an arbitrary direction, is BMSB:

\begin{lemma}
    For any $\mu \in \mathbb{S}^{m+n}$ and any \( k\geq 3 \), the process $\{\langle z_i, \mu \rangle\}_{i=1}^{k-2}$ satisfies the $\left(1, \frac{k^{-1/4}}{2(\log(k)+1)} , 3/10\right)$ BMSB condition.
    \label{lemma:verify_bmsb}
\end{lemma}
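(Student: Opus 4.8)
The plan is, for a fixed horizon $k$ and a fixed index $j$ with $j+1 \le k-1$, to verify $\mathbb{P}(|\langle z_{j+1}, \mu\rangle| \ge k^{-1/4} \mid \mathcal{F}_j) \ge 3/10$ directly, conditionally on a realization of $\mathcal{F}_j$, noting that the only randomness beyond $\mathcal{F}_j$ that enters $z_{j+1}$ is the fresh process noise $w_j$ and probing noise $v_{j+1}$. Two structural facts would be recorded first. (i) Conditional Gaussianity of the state: since $x_j$ and $u_j$ are $\mathcal{F}_j$-measurable and $w_j \sim \mathcal{N}(0, I_n)$ is independent of $\mathcal{F}_j$, we get $x_{j+1} \mid \mathcal{F}_j \sim \mathcal{N}(Ax_j + Bu_j, I_n)$. (ii) Structure of the input: reading off Algorithm~\ref{alg:main}, $u_{j+1} = u^{cb}_{j+1} + (j+1)^{-1/4}v_{j+1}$, where $u^{cb}_{j+1}$ is a deterministic function of $x_{j+1}$ once $\mathcal{F}_j$ is fixed, with $\|u^{cb}_{j+1}\| \le M_{j+1} = \log(j+1)$, and $v_{j+1} \sim \mathcal{N}(0, I_m)$ is independent of $\sigma(\mathcal{F}_j, x_{j+1})$. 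Writing $\mu = (\mu_x^\top, \mu_u^\top)^\top$, these give $\langle z_{j+1}, \mu\rangle = \mu_x^\top x_{j+1} + \mu_u^\top u^{cb}_{j+1} + (j+1)^{-1/4}\mu_u^\top v_{j+1}$, so that conditional on $\sigma(\mathcal{F}_j, x_{j+1})$ the random variable $\langle z_{j+1},\mu\rangle$ is $\mathcal{N}\big(\mu_x^\top x_{j+1} + \mu_u^\top u^{cb}_{j+1},\; (j+1)^{-1/2}\|\mu_u\|^2\big)$.

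The easy regimes would then be dispatched using the elementary anticoncentration bound: a $\mathcal{N}(\eta,\sigma^2)$ random variable with $\sigma \ge \nu$ satisfies $\mathbb{P}(|X|\ge\nu) \ge 2(1-\Phi(1)) \approx 0.317 > 3/10$ for every mean $\eta$ ($\Phi$ the standard normal CDF; the infimum over $\eta$ is attained at $\eta=0$). If $\|\mu_u\| \ge ((j+1)/k)^{1/4}$, the conditional law above already has standard deviation at least $k^{-1/4}$, which settles it. This, together with the case where the circuit breaker contributes nothing at step $j+1$ (i.e.\ $\xi \ne 0$ or $\hat{K}_{j+1} = 0$, so $u^{cb}_{j+1}=0$ and $\langle z_{j+1},\mu\rangle \mid \mathcal{F}_j \sim \mathcal{N}(\cdot,\; \|\mu_x\|^2 + (j+1)^{-1/2}\|\mu_u\|^2)$ with variance $\ge k^{-1/2}(\|\mu_x\|^2+\|\mu_u\|^2) = k^{-1/2}$, using $(j+1)^{-1/2}\ge k^{-1/2}$ and $1 \ge k^{-1/2}$), covers everything except the combination ``$\|\mu_u\|$ small and the circuit breaker armed with $\hat{K}_{j+1}\ne 0$''.

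In that remaining case, $\langle z_{j+1},\mu\rangle = g(x_{j+1}) + (j+1)^{-1/4}\mu_u^\top v_{j+1}$, where $g$ agrees with the linear map $x \mapsto (\mu_x + \hat{K}_{j+1}^\top\mu_u)^\top x$ on the capture set $\mathcal{B} := \{x : \|\hat{K}_{j+1}x\| \le M_{j+1}\}$ and with $x \mapsto \mu_x^\top x$ on its complement. I would bound the conditional Lebesgue density of $\langle z_{j+1},\mu\rangle$ to conclude $\mathbb{P}(|\langle z_{j+1},\mu\rangle| < k^{-1/4} \mid \mathcal{F}_j) \le 7/10$: this density is the convolution of the pushforward of $\mathcal{N}(Ax_j+Bu_j,I_n)$ under $g$ with the $\mathcal{N}(0,(j+1)^{-1/2}\|\mu_u\|^2)$ density, and the pushforward is a mixture of a linear image of a Gaussian along the direction $\mu_x + \hat{K}_{j+1}^\top\mu_u$ restricted to $\mathcal{B}$ (mass $\mathbb{P}(x_{j+1}\in\mathcal{B})$, density at most $(\sqrt{2\pi}\,\|\mu_x+\hat{K}_{j+1}^\top\mu_u\|)^{-1}$) and a linear image along $\mu_x$ restricted to $\mathcal{B}^c$ (density at most $(\sqrt{2\pi}\,\|\mu_x\|)^{-1}$); convolving with the probing density additionally caps each summand by its mass divided by $\sqrt{2\pi}\,(j+1)^{-1/4}\|\mu_u\|$, and balancing these competing bounds yields the estimate.

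The main obstacle is exactly this last step. Because the circuit breaker makes $u_{j+1}$ a thresholded, hence nonlinear, function of $x_{j+1}$, the anticoncentration coming from the probing noise $v_{j+1}$ and that coming from the state noise $w_j$ are coupled and do not simply add; the delicate point is to rule out the degenerate configuration in which the tilted direction $\mu_x + \hat{K}_{j+1}^\top\mu_u$ nearly collapses (so $g \approx 0$ on $\mathcal{B}$) while simultaneously $\mathbb{P}(x_{j+1}\in\mathcal{B}) \approx 1$ and $\|\mu_u\|$ is so small that the probing variance $(j+1)^{-1/2}\|\mu_u\|^2$ falls far below $k^{-1/2}$. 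Excluding this relies on the geometry linking $\hat{K}_{j+1}$, the radius $M_{j+1}$, and the conditional mean $Ax_j+Bu_j$: a collapse of the tilted direction forces $\|\hat{K}_{j+1}\|$ to be comparatively large, which in turn forces $\mathbb{P}(x_{j+1}\in\mathcal{B})$ to be small. Making that trade-off quantitative is the technical heart of the lemma.
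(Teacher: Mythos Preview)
Your route differs substantially from the paper's. The paper does not engage with the nonlinearity of $u_{i+1}^{cb}$ at all: it writes
\[
|\langle z_{i+1},\mu\rangle|
= \bigl|\langle x_{i+1},\mu_1\rangle + \langle u_{i+1}^{cb},\mu_2\rangle + \langle u_{i+1}^{pr},\mu_2\rangle\bigr|
\ \ge\
\bigl|\langle x_{i+1},\mu_1\rangle + \langle u_{i+1}^{pr},\mu_2\rangle\bigr|
\]
(with $\mu_1,\mu_2$ the state and input blocks of $\mu$), then observes that the right-hand side is $\mathcal{F}_i$-conditionally Gaussian with variance at least $k^{-1/2}$, and reads off the $3/10$ bound from the standard Gaussian fact $\mathbb{P}(|X|\ge\sigma)\ge 3/10$. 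You are right to be uneasy here: that displayed step is not a pointwise inequality (adding a term can shrink an absolute value), and the paper gives no argument for why discarding $u_{i+1}^{cb}$ is harmless. The coupling you single out as the technical heart is simply passed over in the paper.

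Unfortunately your own density/trade-off argument for the hard case does not close either, and the obstruction is structural rather than a matter of constants. Your case split only guarantees $\|\mu_u\| < ((j+1)/k)^{1/4}$, which is near $1$ when $j+1$ is near $k$, so ``$\|\mu_u\|$ small'' is not actually in force there. Concretely, take $m=n=1$ and a realization of $\mathcal{F}_j$ with $\xi=0$, $\hat{K}_{j+1}=\hat K$ of moderate size, and conditional mean $Ax_j+Bu_j=0$; choosing $\mu_u=1/\sqrt{1+\hat K^2}$ and $\mu_x=-\hat K\mu_u$ makes the tilted direction $\mu_x+\hat K\mu_u$ vanish while $\|\hat K_{j+1}\|$ stays bounded, so $\mathcal{B}=\{|x|\le \log(j+1)/|\hat K|\}$ carries essentially all of the Gaussian mass of $x_{j+1}$. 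Then $g\equiv 0$ on $\mathcal{B}$, the only remaining randomness is the probing noise of standard deviation $(j+1)^{-1/4}|\mu_u|$, and for $j+1$ near $k$ one gets $\mathbb{P}(|\langle z_{j+1},\mu\rangle|\ge k^{-1/4}\mid\mathcal{F}_j)\approx 2\bigl(1-\Phi(\sqrt{1+\hat K^2}\,)\bigr)$, which drops below $3/10$ as soon as $|\hat K|\gtrsim 0.3$. The geometric trade-off you were counting on (collapse $\Rightarrow$ large $\|\hat K_{j+1}\|$ $\Rightarrow$ small $\mathbb{P}(x_{j+1}\in\mathcal{B})$) is therefore unavailable in this regime. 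This example suggests the lemma as stated may require a weaker $(\nu,p)$ pair or a different choice of filtration; neither the paper's shortcut nor your balancing scheme, as written, settles it.
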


\begin{proof}
    Let \( \mathcal{F}_i \) be the $\sigma$-algebra generated by \( \{ v_1, w_1, v_2, w_2, \ldots, v_{i-1}, w_{i - 1}, v_i \}\,(i=1,\ldots,k-2) \), then we only need to verify
    \begin{equation}
        \mathbb{P} \left( \left| \langle z_{i+1}, \mu \rangle \right| \geq \frac{k^{-1/4}}{2(\log(k)+1)} \middle| \mathcal{F}_i \right) \geq \frac{3}{10}.
        \label{eq:310}
    \end{equation}

    According to the above definition of \( \mathcal{F}_i \), we have \( x_t \in \mathcal{F}_i \) and \( u_t \in \mathcal{F}_i \) for any \( t = 0, 1, \ldots, i \). Furthermore, we have \( \hat{K}_{i+1} \in \mathcal{F}_i \) since \( \hat{K}_{i+1} \) is a deterministic function of \( \hat{\Theta}_{i+1} \), and the latter only depends on \( \{ x_t \}_{t=1}^{i}, \{ u_t \}_{t=1}^{i}\), according to~\eqref{eq:ols}.

    \revise{
    Let
    \begin{equation}
        K_i = \begin{cases}
            \hat{K}_i & u_i^{cb} = u_i^{ce} \\
            0 & u_i^{cb} = 0, \\
        \end{cases}
    \end{equation}
    such that \( u_i^{cb} = K_i x_i \), then we can split \( z_{i+1} \) into two parts as follows:
    \begin{align}
        z_{i+1} &= \begin{bmatrix}
            x_{i+1} \\
            u_{i+1}
        \end{bmatrix} = \begin{bmatrix}
            x_{i+1} \\
            K_{i+1}x_{i+1} + u_{i+1}^{pr}
        \end{bmatrix} \nonumber\\
        &= \begin{bmatrix}
            I_n \\
            K_{i+1}
        \end{bmatrix}
        (A_i x_i + B_i u_i^{pr}) + \nonumber\\
        & \qquad \begin{bmatrix}
            I_n & 0 \\ K_{i+1} & (i+1)^{-1/4} I_m
        \end{bmatrix} \begin{bmatrix}
            w_i \\ v_{i+1}
        \end{bmatrix},
        \label{eq:z_Gauss}
    \end{align}
    where \( K_{i+1} \in \{ \hat{K}_{i + 1}, 0 \} \). Next discuss the two possibilities of \( K_{i+1} \) respectively:
    }

    \revise{
    \textbf{Case 1}: \( K_{i+1} = \hat{K}_{i + 1} \), then by design it holds \( \| K_{i+1} \| \leq \log(i + 1) \leq \log(k) \). Also note that conditional on \( \mathcal{F}_i \) and \( K_{i+1} = \hat{K}_{i + 1} \), the first term in the RHS of~\eqref{eq:z_Gauss} is deterministic, while the second term is Gaussian with zero mean and whose variance is:
    \begin{align}
        &\mathbb{E} \left[ \left( \langle z_{i+1}, \mu \rangle - \mathbb{E} \left[ \langle z_{i+1}, \mu \rangle \middle| \mathcal{F}_i,K_{i+1} = \hat{K}_{i + 1} \right] \right)^2 \right| \nonumber\\
        &\mkern50mu \left. \vphantom{\left( \langle z_{i+1}, \mu \rangle - \mathbb{E} \left[ \langle z_{i+1}, \mu \rangle \middle| \mathcal{F}_i,K_{i+1} = \hat{K}_{i + 1} \right] \right)^2} \mathcal{F}_i,K_{i+1} = \hat{K}_{i + 1} \right] \nonumber\\
        &= \mathbb{E}  \left[ \mu^\top\begin{bmatrix}
            I_n & 0 \\ K_{i+1} & (i+1)^{-1/4} I_m
        \end{bmatrix} \begin{bmatrix}
            w_i \\ v_{i+1}
        \end{bmatrix} \begin{bmatrix}
            w_i \\ v_{i+1}
        \end{bmatrix}^\top \right.  \nonumber\\
        &\mkern50mu \left. \begin{bmatrix}
            I_n & 0 \\ K_{i+1} & (i+1)^{-1/4} I_m
        \end{bmatrix}^\top \mu \middle| \mathcal{F}_i,K_{i+1} = \hat{K}_{i + 1}\right]  \nonumber\\
        &\geq (i+1)^{-1/2} \mu^\top \begin{bmatrix}
            I_n & K_{i+1}^\top \\
            K_{i+1} & I_m + K_{i+1}K_{i+1}^\top
        \end{bmatrix} \mu.
        \label{eq:var_zmu}
    \end{align}
    To derive a lower bound on the variance of \( \langle z_{i+1}, \mu \rangle \) conditional on \( \mathcal{F}_i \), we bound the minimum singular value of the matrix in the RHS of~\eqref{eq:var_zmu} as follows:
    \begin{align}
        &\sigma_{\min} \left( \begin{bmatrix}
            I_n & K_{i+1}^\top \\
            K_{i+1} & I_m + K_{i+1}K_{i+1}^\top
        \end{bmatrix} \right) \nonumber\\
        &= \left\| \begin{bmatrix}
            I_n & K_{i+1}^\top \\
            K_{i+1} & I_m + K_{i+1}K_{i+1}^\top
        \end{bmatrix}^{-1}  \right\|^{-1} \nonumber\\
        &= \left\| \begin{bmatrix}
            I_n  & -K_{i+1}^\top \\
            0 & I_m
        \end{bmatrix} \begin{bmatrix}
            I_n  & 0 \\
            -K_{i+1} & I_m
        \end{bmatrix} \right\|^{-1} \nonumber\\
        &\geq \frac{1}{2 + \| K_{i+1} \|^2} \geq \frac{1}{4(\log(k) + 1)^2},
    \end{align}
    where the second last inequality follows from the fact that \[  \|M\|^2 \leq \|A\|^2 + \|B\|^2 + \|D\|^2 \text{ for } M = \begin{bmatrix}
        A & B \\ 0 & D
    \end{bmatrix}.\]
    Hence, the RHS of~\eqref{eq:var_zmu} is greater than or equal to \[ \frac{k^{-1/2}}{4(\log(k) + 1)^2}.\]
    The following inequality then follows from the fact that for any $X \sim \mathcal{N}(\mu, \sigma^2)$, it holds $\mathbb{P}(|X|\geq \sigma) \geq \mathbb{P}(|X - \mu| \geq \sigma) \geq 3 / 10$:
    \begin{equation}
        \mathbb{P} \left( \left| \langle z_{i+1}, \mu \rangle \right| \geq \frac{k^{-1/4}}{2(\log(k)+1)} \middle| \mathcal{F}_i,K_{i+1} = \hat{K}_{i + 1} \right) \geq \frac{3}{10}.
        \label{eq:310_case1}
    \end{equation}
    }

    \revise{
    \textbf{Case 2}: \( K_{i+1} = 0 \), then we still have \( \| K_{i+1} \| = 0 \leq \log(k) \), and following the same argument from Case 1, we have
    \begin{equation}
        \mathbb{P} \left( \left| \langle z_{i+1}, \mu \rangle \right| \geq \frac{k^{-1/4}}{2(\log(k)+1)} \middle| \mathcal{F}_i,K_{i+1} = 0 \right) \geq \frac{3}{10}.
        \label{eq:310_case2}
    \end{equation}
    }

    \revise{
    The conclusion~\eqref{eq:310} follows from combining~\eqref{eq:310_case1} and~\eqref{eq:310_case2}.
    }
\end{proof}

An upper bound on $\| V_k^{-1} \|$ can be obtained by applying an anti-concentration property of BMSB, along with a covering argument in~\cite{simchowitz2018learning}:

\begin{lemma}
    For any fixed $k \geq 3$ and $0 < \delta_k \leq 1/2$, it holds
    \begin{align}
        & \mathbb{P}\left( \left\| V_k^{-1} \right\|  \geq   \frac{12800}{3}k^{-1/2}(\log(k))^2 \right) \leq \nonumber \\
        & \quad \delta_k + e^{-\frac{3}{800}k + (m+n) \log(38400C_Vk^{1/2}(\log(k/\delta_k))^4) },
        \label{eq:min_eig}
    \end{align}
    where $C_V$ is defined in Theorem~\ref{thm:prop}, item~\ref{item:Eest}).
    \label{lemma:min_eig}
\end{lemma}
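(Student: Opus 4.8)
The plan is to obtain the claimed bound on $\|V_k^{-1}\|$ by lower-bounding $\lambda_{\min}(V_k)$, combining the block-martingale small-ball (BMSB) property from Lemma~\ref{lemma:verify_bmsb} with a covering argument, in the manner of Simchowitz et al.~\cite{simchowitz2018learning}. Since $\|V_k^{-1}\| = \lambda_{\min}(V_k)^{-1}$, the event in~\eqref{eq:min_eig} coincides with $\{\lambda_{\min}(V_k)\leq\tfrac{9}{1600}k^{1/2}\}$. I would first decouple the two sources of randomness by conditioning on a good event for $\|V_k\|$: with $\mathcal{B}:=\{\|V_k\|\leq C_V k(\log(k/\delta_k))^2\}$,
\begin{equation}
    \mathbb{P}\!\left(\lambda_{\min}(V_k)\leq\tfrac{9}{1600}k^{1/2}\right)\leq\mathbb{P}\!\left(\lambda_{\min}(V_k)\leq\tfrac{9}{1600}k^{1/2},\ \mathcal{B}\right)+\mathbb{P}(\mathcal{B}^c),
\end{equation}
and Proposition~\ref{prop:max_eig}, invoked on $\mathcal{E}_{\text{noise}}$ at a constant-scaled confidence level (absorbed into the constants), gives $\mathbb{P}(\mathcal{B}^c)\leq\delta_k$ — this is the $\delta_k$ term in~\eqref{eq:min_eig}.

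For the remaining term I would argue in two steps. \emph{(i) Per-direction small ball.} Fix $\mu\in\mathbb{S}^{m+n}$; by Lemma~\ref{lemma:verify_bmsb} the scalar process $\{\langle z_i,\mu\rangle\}_{i=1}^{k-1}$ is $(1,k^{-1/4},3/10)$-BMSB for $\{\mathcal{F}_i\}$. With $Y_i:=\mathbf{1}\{|\langle z_i,\mu\rangle|\geq k^{-1/4}\}$, the BMSB condition gives $\mathbb{E}[Y_{i+1}\mid\mathcal{F}_i]\geq 3/10$, so the partial sums of $Y_i-\mathbb{E}[Y_i\mid\mathcal{F}_{i-1}]$ form a bounded-increment martingale; a lower-tail (Azuma-type) estimate — this is exactly the anti-concentration property of BMSB processes from~\cite{simchowitz2018learning}, here with $\nu^2=k^{-1/2}$, $p=3/10$ and $\tfrac{p^2}{8}=\tfrac{9}{800}$ — shows that, outside an event of probability $\lesssim e^{-\frac{9}{800}k}$, one has $\mu^\top V_k\mu=\sum_{i=1}^{k-1}\langle z_i,\mu\rangle^2\geq k^{-1/2}\sum_{i=1}^{k-1}Y_i\geq\tfrac{9}{800}k^{1/2}$. \emph{(ii) Covering.} Let $\mathcal{N}$ be a minimal $\epsilon$-net of $\mathbb{S}^{m+n}$ with $\epsilon$ a suitable constant multiple of $(C_V k^{1/2}(\log(k/\delta_k))^2)^{-1}$, so that $\log|\mathcal{N}|\leq(m+n)\log(800C_V k^{1/2}(\log(k/\delta_k))^2)$; a union bound of step (i) over $\mathcal{N}$ then shows that, except with probability $|\mathcal{N}|\,e^{-\frac{9}{800}k}$, $\mu^\top V_k\mu\geq\tfrac{9}{800}k^{1/2}$ holds for all $\mu\in\mathcal{N}$. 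Intersecting with $\mathcal{B}$ and using the standard passage $\lambda_{\min}(V_k)\geq\min_{\mu\in\mathcal{N}}\mu^\top V_k\mu-2\epsilon\|V_k\|$ together with the choice of $\epsilon$ yields $\lambda_{\min}(V_k)\geq\tfrac{9}{1600}k^{1/2}$ on that intersection, so the remaining term is at most $e^{-\frac{9}{800}k+(m+n)\log(800C_V k^{1/2}(\log(k/\delta_k))^2)}$, and~\eqref{eq:min_eig} follows.

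The step I expect to require the most care is the covering argument, because of the coupling between the net resolution $\epsilon$ and the random operator norm $\|V_k\|$: the discretization error $2\epsilon\|V_k\|$ must stay below the per-direction lower bound $\tfrac{9}{800}k^{1/2}$, which forces $\epsilon$ to scale like $k^{1/2}/\|V_k\|$; since $\|V_k\|$ is itself random, one has to restrict to $\mathcal{B}$ first, and this is precisely why the bound $C_V k(\log(k/\delta_k))^2$ from Proposition~\ref{prop:max_eig} appears both inside the logarithm (through $|\mathcal{N}|$) and as the additive $\delta_k$. A finer net inflates the union bound by $|\mathcal{N}|$, which is dominated by the exponential small-ball decay $e^{-\frac{9}{800}k}$ only once $k$ is moderately large; for small $k$ the right-hand side of~\eqref{eq:min_eig} already exceeds $1$, so the otherwise loose constants in steps (i)–(ii) only have to close in the large-$k$ regime, where reproducing the exact constants $\tfrac{9}{1600}$, $\tfrac{9}{800}$ and $800C_V$ of the statement is a matter of careful bookkeeping rather than a genuine difficulty.
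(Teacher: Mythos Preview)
Your proposal is correct and follows essentially the same approach as the paper: both combine the per-direction BMSB anti-concentration bound (the paper cites \cite[Prop.~2.5]{simchowitz2018learning} directly, you spell it out via the Azuma-type martingale argument), a covering argument calibrated against the upper bound $\|V_k\|\leq C_V k(\log(k/\delta_k))^2$ from Proposition~\ref{prop:max_eig}, and a union bound over the net. The only cosmetic difference is that the paper packages the covering step through \cite[Lemma~D.1]{simchowitz2018learning} (a $1/4$-net of the ellipsoid $\mathcal{S}_{\underline{\Gamma}}$ in the $\|\cdot\|_{\overline{\Gamma}}$ norm), whereas you use the more elementary sphere $\epsilon$-net with the perturbation inequality $\lambda_{\min}(V_k)\geq\min_{\mu\in\mathcal{N}}\mu^\top V_k\mu-2\epsilon\|V_k\|$; since both $\overline{\Gamma}$ and $\underline{\Gamma}$ are scalar multiples of the identity here, the two formulations are equivalent and yield the same net cardinality bound.
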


\begin{proof}
    Firstly, for any fixed $\mu \in \mathbb{S}^{m+n}$, applying~\cite[Prop. 2.5]{simchowitz2018learning} to the process $\{\langle z_i, \mu \rangle\}_{i=1}^{k-2}$, which is $\left(1, \frac{k^{-1/4}}{2(\log(k)+1)} , 3/10\right)$-BMSB by Lemma~\ref{lemma:verify_bmsb}, we have
    \begin{equation*}
        \mathbb{P}\left( \sum_{i=1}^{k-2} \langle z_i, \mu \rangle ^2 \leq \frac{k^{-1/2}(3/10)^2}{32(\log(k)+1)^2} (k-2) \right) \leq e^{-\frac{(3/10)^2(k-2)}{8}},
    \end{equation*}
    which implies
    \begin{equation}
        \mathbb{P}\left( \mu^\top V_k \mu \leq \frac{3}{12800}\cdot\frac{k^{1/2}}{(\log(k))^2} \right) \leq e^{-\frac{3}{800}k}
        \label{eq:muTVmu}
    \end{equation}
    by some simple scaling operations.
    Next, we shall choose multiple $\mu$'s and using a covering argument to lower bound the minimum eigenvalue of $V_k$, and hence to upper bound $\| V_k^{-1} \|$: let $\overline{\Gamma} = C_V k (\log(k / \delta_k))^2 I_{m+n}$, and $\underline{\Gamma} = (3/12800)k^{1/2}(\log(k))^{-2}I_{m+n}$. Let $\mathcal{T}$ be a minimal $1/4$-net of $\mathcal{S}_{\underline{\Gamma}}$ in the norm $\| \cdot \|_{\overline{\Gamma}}$\revise{, where $\mathbb{S}_{\underline{\Gamma}}$ is defined as the unit sphere with scaling matrix $\underline{\Gamma}$, i.e., $\mathbb{S}_{\underline{\Gamma}} = \{ v \in \mathbb{R}^{m+n} | v^\top \underline{\Gamma}^{-1} v = 1   \}$.}
    By \cite[Lemma~D.1]{simchowitz2018learning}, we have
    \begin{align}
        \log \left| \mathcal{T} \right| & \leq (m+n) \log(9) + \log \det(\overline{\Gamma} \underline{\Gamma}^{-1}) \nonumber \\
        & \leq (m+n)\log(38400C_V k^{1/2}(\log(k / \delta_k))^4).
        \label{eq:Tsize}
    \end{align}
    According to~\eqref{eq:muTVmu} and~\eqref{eq:Tsize}, we have
    \begin{align}
        & \mathbb{P}\left( \mu^\top V_k \mu \leq\frac{3}{12800}\cdot\frac{k^{1/2}}{(\log(k))^2} ,\;\forall \mu \in \mathcal{T} \right) \leq | \mathcal{T} | e^{-\frac{3}{800}k} \nonumber \\
        & \leq e^{-\frac{3}{800}k + (m+n) \log(38400 C_V k^{1/2}(\log(k/\delta_k))^4) }.
        \label{eq:Pmin}
    \end{align}
    On the other hand, according to Proposition~\ref{prop:max_eig} and Theorem~\ref{thm:prop}, item~\ref{item:Enoise}), we have
    \begin{equation}
         \mathbb{P} \left(\| V_k \| \leq C_V k (\log(k / \delta_k))^2  \right) \leq \delta_k,
        \label{eq:Pmax}
    \end{equation}
    From~\eqref{eq:Pmin}, \eqref{eq:Pmax} and \cite[Lemma~D.1]{simchowitz2018learning}, it follows that $\mathbb{P} \left( V_k \succ \underline{\Gamma} / 2 \right)$ is no greater than the RHS of~\eqref{eq:min_eig}, which is equivalent to the conclusion.
\end{proof}

The next proposition converts Lemma~\ref{lemma:min_eig} into a time-uniform bound:

\begin{proposition}
    For $\delta$ satisfying~\eqref{eq:delta_range} and $k_0$ defined in~\eqref{eq:k0}, it holds
    \begin{equation*}
        \mathbb{P} \left( \left\| V_{k}^{-1} \right\| \leq \frac{12800}{3} k^{-1/2}(\log(k))^2, \forall k \geq k_0 \right) \leq 3\delta.
    \end{equation*}
    \label{prop:min_eig}
\end{proposition}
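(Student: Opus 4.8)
The plan is to turn the single-time tail bound of Lemma~\ref{lemma:min_eig} into the claimed time-uniform estimate by a union bound over $k \geq k_0$, after choosing the free parameters $\delta_k$ appropriately. Since Lemma~\ref{lemma:min_eig} controls the tail $\mathbb{P}(\|V_k^{-1}\| \geq \frac{1600}{9}k^{-1/2})$, the time-uniform statement amounts to bounding the union $\bigcup_{k \geq k_0}\{\|V_k^{-1}\| \geq \frac{1600}{9}k^{-1/2}\}$ by $3\delta$. Concretely, I would estimate
\[
\mathbb{P}\left( \bigcup_{k \geq k_0}\left\{ \left\| V_k^{-1} \right\| \geq \tfrac{1600}{9}k^{-1/2} \right\} \right) \leq \sum_{k \geq k_0}\left[ \delta_k + \exp\left( -\frac{9}{800}k + (m+n)\log\left( 800 C_V k^{1/2}(\log(k/\delta_k))^2 \right) \right) \right],
\]
and show the right-hand side is at most $3\delta$. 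The natural choice is $\delta_k = 6\delta/(\pi^2 k^2)$, which is admissible ($0 < \delta_k \leq 1/2$ since $\delta < (800 C_V)^{-1} < 1$ by~\eqref{eq:delta_range}) and makes the first family summable, $\sum_{k \geq k_0}\delta_k \leq \sum_{k \geq 1}\delta_k = \delta$. With this choice $\log(k/\delta_k) = 3\log k + \log(\pi^2/(6\delta))$ stays poly-logarithmic and will not interfere with the exponential decay exploited below.

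It remains to show $\sum_{k \geq k_0}\exp(g(k)) \leq 2\delta$, where $g(k) = -\frac{9}{800}k + (m+n)\log(800 C_V k^{1/2}(\log(k/\delta_k))^2)$. I would split the linear term and write $g(k) = -\frac{9}{1600}k + h(k)$ with $h(k) = -\frac{9}{1600}k + (m+n)\log(800 C_V k^{1/2}(\log(k/\delta_k))^2)$, then argue $h(k) \leq 0$ for every $k \geq k_0$. Granting this, $\exp(g(k)) \leq \exp(-\frac{9}{1600}k)$, and the geometric series gives
\[
\sum_{k \geq k_0}\exp\left( -\tfrac{9}{1600}k \right) = \frac{\exp(-\tfrac{9}{1600}k_0)}{1 - \exp(-9/1600)} \leq \frac{3200}{9}\exp\left( -\tfrac{9}{1600}k_0 \right),
\]
and one checks $\frac{3200}{9}\exp(-\frac{9}{1600}k_0) \leq 2\delta$, which reduces to $\frac{9}{1600}k_0 \geq \log(1/\delta) + \log(1600/9)$. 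This holds with room to spare: $k_0 \geq 600(m+n)\log(1/\delta) + 5400 \geq 600\log(1/\delta)$ yields $\frac{9}{1600}k_0 \geq 3.375\log(1/\delta)$, while the additive $5400$ absorbs the constant $\log(1600/9)$.

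The main obstacle is the pointwise inequality $h(k) \leq 0$ for $k \geq k_0$, i.e.
\[
\frac{9}{1600}k \geq (m+n)\left[ \log(800 C_V) + \tfrac12\log k + 2\log\log(k/\delta_k) \right], \quad k \geq k_0.
\]
Because the right-hand side grows only logarithmically in $k$ while the left-hand side is linear, it suffices to verify the inequality at $k = k_0$ and then confirm the left-hand side increases faster thereafter; the latter follows since $\tfrac{d}{dk}$ of the right-hand side is $O((m+n)/k) \leq O((m+n)/k_0) \lesssim 1/\log(1/\delta)$, which is below the slope $\frac{9}{1600}$ for $k \geq k_0$. At $k = k_0 \approx 600(m+n)\log(1/\delta)$ the dominant term $(m+n)\log(1/\delta)$ of the right-hand side (using $\log(800 C_V) \leq \log(1/\delta)$ from $\delta < (800 C_V)^{-1}$) is covered by $\frac{9}{1600}k_0 \geq 3.375(m+n)\log(1/\delta)$ with a comfortable margin, and the lower-order $(m+n)(\tfrac12\log k_0 + 2\log\log(k_0/\delta_{k_0}))$ contributions are controlled by that margin together with the constant $5400$; the admissible range~\eqref{eq:delta_range} is precisely what keeps $\log(1/\delta)$, and hence these $\log\log$ terms, of lower order relative to $m+n$. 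The delicate part is the exact bookkeeping of constants — confirming that the coefficient $600$ and offset $5400$ in~\eqref{eq:k0} are just large enough to dominate $\log(800 C_V)$, $\tfrac12\log k_0$, and $2\log\log(k_0/\delta_{k_0})$ simultaneously. Combining $\sum_{k \geq k_0}\delta_k \leq \delta$ with $\sum_{k \geq k_0}\exp(g(k)) \leq 2\delta$ then yields the bound $\leq 3\delta$.
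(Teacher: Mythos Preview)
Your proposal is correct and follows essentially the same route as the paper: choose $\delta_k \propto \delta/k^2$, apply Lemma~\ref{lemma:min_eig} for each $k\ge k_0$, show that for $k\ge k_0$ the logarithmic term $(m+n)\log(800C_Vk^{1/2}(\log(k/\delta_k))^2)$ is dominated by a constant fraction of the linear term $\tfrac{9}{800}k$, and then sum the resulting geometric tail together with $\sum_k \delta_k$ to get $\le 3\delta$. The only cosmetic differences are your normalization $\delta_k=6\delta/(\pi^2k^2)$ versus the paper's $\delta_k=\delta/k^2$, and your split of the exponent at $\tfrac{9}{1600}k$ versus the paper's reduction to $e^{-k/800}$; the ``delicate bookkeeping'' you flag is exactly what the paper handles via the bound $(m+n)\log(\cdots)\le \tfrac{1}{100}k$ using $k_0\ge 600(m+n)\log(1/\delta)+5400$ and the range~\eqref{eq:delta_range}.
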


\begin{proof}
    For $k\geq k_0$, with $\delta_k = \delta / k^2$, it holds
    \begin{equation*}
        k \geq \left( 5000 (m+n) \right)^{4/3},
    \end{equation*}
    and hence,
    \begin{align}
        & (m+n) \log(38400 C_V k^{1/2}(\log(k/\delta_k))^4) \nonumber \\
        \leq & (m+n) \left( 5\log(1 / \delta) + \frac{25}{2} \log(k) \right) \nonumber \\
        \leq & \frac{1}{800}k + \frac{25}{2}(m+n)k^{1/4} \leq \frac{1}{400}k.
        \label{eq:min_eig_simplify}
    \end{align}
    Substituting~\eqref{eq:min_eig_simplify} into~\eqref{eq:min_eig}, we have
    \begin{equation*}
        \mathbb{P}\left(\left\| V_k^{-1}\right\|  \geq   \frac{12800}{3}k^{-1/2}(\log(k))^2 \right) \leq \frac{\delta}{k^2} + e^{-\frac{1}{800}k}.
    \end{equation*}
    Taking the union bound over $k = k_0, k_0+1,\ldots$, we have
    \begin{align*}
        & \mathbb{P}\left(\left\| V_k^{-1} \right\|  \geq   \frac{12800}{3}k^{-1/2}(\log(k))^2, \forall k\geq k_0 \right) \nonumber \\
        \leq & \frac{\pi^2}{6} \delta + 801 e^{-\frac{1}{800}k_0} \leq 3 \delta. \qedhere
    \end{align*}
\end{proof}

Theorem~\ref{thm:prop}, item~\ref{item:Eest}) follows from Propositions~\ref{prop:Els}, \ref{prop:max_eig} and \ref{prop:min_eig}.

\subsection{Proof of Theorem~\ref{thm:prop}, item~\ref{item:Tstab})}

\begin{proof}
Let
\begin{align}
    & T_{1} = \inf\left\{ T \vphantom{\left(A^{t_k}\right)^\top}  \right. \left| \left(A^{t_k}\right)^\top P^* A^{t_k} < \rho P^*,  \forall k \geq T \right\}, \\
    & T_{2} = \inf\left\{ T \vphantom{\left(A+B\hat{K}_k\right)^\top} \right.\left| \left(A+B\hat{K}_k\right)^\top P^* \left(A+B\hat{K}_k\right) < \rho P^*,\right. \nonumber\\
    & \left. \mkern100mu \vphantom{\left(A+B\hat{K}_k\right)^\top}\forall k \geq T \right\}.
\end{align}
We shall bound $T_1$ and $T_2$ respectively:

\begin{enumerate}
    \item From the assumption $\rho(A) < 1$, it holds
    \begin{equation*}
        \lim_{k\to\infty}\left(A^k\right)^\top P A^k = 0,
    \end{equation*}
    which, together with $t_k = \lfloor \log(k) \rfloor$, implies $T_1$ is a finite constant independent of $\delta$, i.e., $T_1 \lesssim 1$.

    \item Since $(A+B{K})^\top P^* (A+B{K})$ is a continuous function of $K$, from~\eqref{eq:rho_star}, there exists a system-dependent constant $\epsilon_K$, such that $(A+B{K})^\top P^* (A+B{K}) < \rho P^*$ whenever $\| K - K^* \| < \epsilon_K$. On the other hand, since $\hat{K}_k$ is a continuous function of $\hat{\Theta}_k$~\cite[Proposition~6]{simchowitz2020naive}, there exists a system-dependent constant $\epsilon_\Theta$, such that $\| \hat{K} - K^* \| < \epsilon_K$ as long as $\| \hat{\Theta}_k - \Theta \| < \epsilon_\Theta$.
    It follows from Theorem~\ref{thm:prop}, item~\ref{item:Eest}) that $\| \hat{\Theta}_k - \Theta \| < \epsilon_\Theta$ whenever $k\geq (9C_\Theta^2 / \epsilon_\Theta^2)(\log(1 / \delta))^2$, and hence $T_2 \lesssim (\log(1 / \delta))^2$.
\end{enumerate}

In summary, it holds $T_{\text{stab}} = \max\{T_1, T_2 \} \lesssim (\log(1 / \delta))^2$.
\end{proof}

\subsection{Proof of Theorem~\ref{thm:prop}, item~\ref{item:Tnocb})}

This subsection is dedicated to bounding the time after which the circuit-breaking is not triggered any more.
The outline of the proof is stated as follows: firstly, we define a subsequence notation to deal with the dwell time $t_k$ of circuit breaking.
Secondly, an upper bound on the state and the certainty equivalent input after $T_{\text{stab}}$ is derived, which is shown to be asymptotically smaller than the circuit-breaking threshold $M_k = \log(k)$.
Based on the above upper bound on the certainty equivalent input, we can finally bound $T_{\text{nocb}}$, i.e., the time it takes for the certainty equivalent input to stay below the threshold $M_k$, which leads to the desired conclusion.

\subsubsection{A subsequence notation}

Consider the subsequence of states and inputs, where steps within the circuit-breaking period are skipped, defined below:
\revise{\begin{align}
&i(1)=1, i(j+1)=\left\{\begin{array}{ll}
i(j)+1 & {u}^{cb}_{i(j)} \neq 0 \\
i(j)+t_{i(j)} & {u}^{cb}_{i(j)}=0
\end{array},\right. \label{eq:subseq1}\\
&\tilde{x}_{j}=x_{i(j)},\tilde{u}^{ce}_{j}=u^{ce}_{i(j)},\tilde{u}^{cb}_{j}=u^{cb}_{i(j)}. \label{eq:subseq2}
\end{align}
It follows that $\{ \tilde{x}_j \}$ evolves as $
\tilde{x}_{j+1} = \tilde{A}_j \tilde{x}_j +  \tilde{w}_j
$, where:
\begin{align}
    & \tilde{A}_j = \begin{cases}
        A + B \hat{K}_{i(j)} & u^{cb}_{i(j)} \neq 0 \\
        A^{t_{i(j)}} & u^{cb}_{i(j)} = 0,
    \end{cases} \label{eq:subseq3} \\
    & \tilde{w}_j = \begin{cases}
        w_{i(j)} & u^{cb}_{i(j)} \neq 0 \\
        \sum_{\tau = 0}^{t_{i(j)}  - 1} A^\tau w_{i(j-1)+\tau} & u^{cb}_{i(j)} = 0.
    \end{cases}
    \label{eq:subseq4}
\end{align}
}

Consider $T_{\text{stab}}$ defined in~\eqref{eq:Tstab}. We can define $\tilde{T}_{\text{stab}}$ as the first index in the above subsequence for which the stabilization condition is satisfied, i.e.,
\begin{equation}
    \tilde{T}_{\text{stab}} = \inf\{ T \mid i(T) \geq T_{\text{stab}} \}.
    \label{eq:Tstabt}
\end{equation}

\subsubsection{Upper bound on state and certainty equivalent input after $T_{\text{stab}}$}

\begin{proposition}
    On the event $\mathcal{E}_{\text{noise}}(\delta) \cap \mathcal{E}_{\text{est}}(\delta)$, it holds
    \begin{align}
        \left\| \tilde{x}_{\tilde{T}_{\text{stab}}+k} \right\|  \lesssim \rho^{k/2} \log(1 / \delta) + \sqrt{\log(k / \delta)}, \label{eq:bound_xt}\\
        \left\| \tilde{u}^{ce}_{\tilde{T}_{\text{stab}}+k} \right\|  \lesssim \rho^{k/2} \log(1 / \delta) + \sqrt{\log(k / \delta)}, \label{eq:bound_ut}
    \end{align}
    where $\rho = (1 + \rho^*) / 2$.
    \label{prop:u_upper_bound}
\end{proposition}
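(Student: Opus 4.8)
The plan is to reduce the evolution of the skipped subsequence $\{\tilde x_k\}$, for indices $k \geq \tilde T_{\text{stab}}$ (defined in~\eqref{eq:Tstabt}), to a single perturbed linear recursion that is contractive with respect to the common Lyapunov function $P^*$, and then to unroll that recursion. Fix $k \geq \tilde T_{\text{stab}}$, so that $i(k) \geq T_{\text{stab}}$ and both matrix inequalities in the definition~\eqref{eq:Tstab} of $T_{\text{stab}}$ hold at the original index $i(k)$. By the construction~\eqref{eq:subseq1}, two cases arise. If $u^{cb}_{i(k)} \neq 0$ (a certainty-equivalent step), then $i(k+1) = i(k)+1$, $u^{cb}_{i(k)} = \hat K_{i(k)} x_{i(k)}$, and
\[
\tilde x_{k+1} = \bigl(A + B\hat K_{i(k)}\bigr)\tilde x_k + \bigl(Bu^{pr}_{i(k)} + w_{i(k)}\bigr),
\]
where $A+B\hat K_{i(k)}$ is $\sqrt{\rho}$-contractive in $\|\cdot\|_{P^*}$ by the second inequality in~\eqref{eq:Tstab}. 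If $u^{cb}_{i(k)} = 0$ (a circuit-breaking step), then $u^{cb}$ vanishes throughout the dwell window, $i(k+1) = i(k)+t_{i(k)}$, and iterating the open-loop dynamics over the window gives
\[
\tilde x_{k+1} = A^{t_{i(k)}}\tilde x_k + \sum_{\ell=1}^{t_{i(k)}} A^{\,t_{i(k)}-\ell}\bigl(Bu^{pr}_{i(k)+\ell-1} + w_{i(k)+\ell-1}\bigr),
\]
where $A^{t_{i(k)}}$ is $\sqrt{\rho}$-contractive in $\|\cdot\|_{P^*}$ by the first inequality in~\eqref{eq:Tstab}. In both cases I would write $\tilde x_{k+1} = \Phi_k \tilde x_k + e_k$ with $\|\Phi_k v\|_{P^*} \leq \sqrt{\rho}\,\|v\|_{P^*}$, so the problem reduces to a uniform bound on the perturbation $\|e_k\|_{P^*}$.

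On $\mathcal{E}_{\text{noise}}(\delta)$ every increment satisfies $\|Bu^{pr}_j + w_j\| \leq (\|B\|+1)\cdot 2\sqrt{n+1}\,\sqrt{\log(j/\delta)}$, which at once gives $\|e_k\|_{P^*} \lesssim \sqrt{\log(i(k)/\delta)}$ in the certainty-equivalent case. The delicate case, and what I expect to be the main obstacle, is the circuit-breaking one, where $e_k$ is a sum over a dwell window of \emph{growing} length $t_{i(k)} = \lfloor \log i(k)\rfloor$, so that a naive triangle inequality would lose an extra factor of $\log i(k)$. The key point is that open-loop stability makes $c_A := \sum_{\ell \geq 0}\|A^\ell\|$ a finite system constant (since $\rho(A) < 1$), whence $\|e_k\|_{P^*} \lesssim \|e_k\| \leq c_A \max_{0 \leq \ell < t_{i(k)}}\|Bu^{pr}_{i(k)+\ell} + w_{i(k)+\ell}\| \lesssim \sqrt{\log(i(k+1)/\delta)}$, with no window-length factor. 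A secondary point requiring care is the bookkeeping between the original index $i(k)$ and the subsequence index $k$: since each subsequence step advances the original index by at most $\log i(k)$, one gets $i(k) \lesssim k\log k$, hence $\log(i(k)/\delta) \lesssim \log(k/\delta)$; and $i(k) \geq k$ yields $\tilde T_{\text{stab}} \leq T_{\text{stab}}$ together with $i(\tilde T_{\text{stab}}) \leq T_{\text{stab}} + \log T_{\text{stab}}$.

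It then remains to unroll the recursion from $\tilde T_{\text{stab}}$:
\[
\|\tilde x_{\tilde T_{\text{stab}}+k}\|_{P^*} \leq \rho^{k/2}\|\tilde x_{\tilde T_{\text{stab}}}\|_{P^*} + \sum_{j=0}^{k-1}\rho^{(k-1-j)/2}\,\|e_{\tilde T_{\text{stab}}+j}\|_{P^*}.
\]
For the first term I would invoke item~\ref{item:Tstab}), which on $\mathcal{E}_{\text{est}}(\delta)$ gives $T_{\text{stab}} \lesssim (\log(1/\delta))^2$, together with item~\ref{item:xnorm}) applied at the index $i(\tilde T_{\text{stab}}) \lesssim (\log(1/\delta))^2$, so that $\|\tilde x_{\tilde T_{\text{stab}}}\| = \|x_{i(\tilde T_{\text{stab}})}\| \leq C_x \log(i(\tilde T_{\text{stab}})/\delta) \lesssim \log(1/\delta)$ and the contribution is $\lesssim \rho^{k/2}\log(1/\delta)$. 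For the perturbation sum, each term obeys $\|e_{\tilde T_{\text{stab}}+j}\|_{P^*} \lesssim \sqrt{\log(i(\tilde T_{\text{stab}}+j)/\delta)} \lesssim \sqrt{\log(k/\delta)}$ uniformly over $0 \leq j \leq k-1$ (absorbing $\tilde T_{\text{stab}} \lesssim (\log(1/\delta))^2$ into $\log(1/\delta) \leq \log(k/\delta)$), so the geometric sum is $\lesssim \sqrt{\log(k/\delta)}$; since $\|\cdot\|$ and $\|\cdot\|_{P^*}$ are equivalent up to constants, this proves~\eqref{eq:bound_xt}. Finally, $\tilde u^{ce}_{\tilde T_{\text{stab}}+k} = \hat K_{i(\tilde T_{\text{stab}}+k)}\,\tilde x_{\tilde T_{\text{stab}}+k}$ with $i(\tilde T_{\text{stab}}+k) \geq T_{\text{stab}}$, and the continuity-plus-estimation argument already used to bound $T_2$ in the proof of item~\ref{item:Tstab}) shows $\|\hat K_{i(\tilde T_{\text{stab}}+k)}\|$ is bounded by a system constant; multiplying~\eqref{eq:bound_xt} through by it yields~\eqref{eq:bound_ut}.
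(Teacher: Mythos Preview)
Your proof is correct and follows essentially the same approach as the paper: both write the skipped subsequence as a $\|\cdot\|_{P^*}$-contractive recursion $\tilde x_{k+1}=\Phi_k\tilde x_k+e_k$ with $\Phi_k\in\{A+B\hat K_{i(k)},A^{t_{i(k)}}\}$, bound the circuit-breaking perturbation via the finite constant $\sum_{\ell\ge 0}\|A^\ell\|$, unroll the recursion, control the initial state $\|\tilde x_{\tilde T_{\text{stab}}}\|$ through items~\ref{item:xnorm}) and~\ref{item:Tstab}), and do index bookkeeping to reduce $\log(i(\cdot)/\delta)$ to $\log(k/\delta)$. Your treatment is in fact slightly more careful than the paper's in that you explicitly include the $Bu^{pr}$ contribution in $e_k$ (the paper's $\tilde w_j$ mentions only the process noise $w$), and your justification of the $\|\hat K_k\|$ bound via the continuity argument from item~\ref{item:Tstab}) is more explicit than the paper's terse ``by definition of $T_{\text{stab}}$''; neither difference changes the argument.
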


\begin{proof}
    We can expand $\tilde{x}_{\tilde{T}_{\mathrm{stab}}+k}$ as:
    \begin{align}
        \tilde{x}_{\tilde{T}_{\text{stab}} + k} =& \tilde{A}_{\tilde{T}_{\text{stab}}+k-1}\tilde{A}_{\tilde{T}_{\text{stab}}+k-2}\cdots\tilde{A}_{\tilde{T}_{\text{stab}}} \tilde{x}_{\tilde{T}_{\text{stab}}} + \nonumber\\
        & \tilde{A}_{\tilde{T}_{\text{stab}}+k-1}\tilde{A}_{\tilde{T}_{\text{stab}}+k-2}\cdots\tilde{A}_{\tilde{T}_{\text{stab}} + 1}\tilde{w}_{\tilde{T}_{\text{stab}}} + \nonumber\\
        & \cdots + \nonumber\\
        & \tilde{w}_{\tilde{T}_{\text{stab}}+k-1},
    \end{align}
    where:
    \begin{itemize}
        \item $\tilde{A}_j \in \{A+B\hat{K}_{i(j)}, A^{t_{i(j)}}\}$ \revise{by~\eqref{eq:subseq3}}, and must satisfy $\tilde{A}_j^\top P \tilde{A}_j < \rho P$ for $j \geq \tilde{T}_{\text{stab}}$, by definition of $\tilde{T}_{\text{stab}}$ in~\eqref{eq:Tstab};
        \item $\tilde{w}_j \in \{w_{i(j)}, \sum_{\tau = 0}^{t_{i(j)}  - 1} A^\tau w_{i(j-1)+\tau}\}$ \revise{by~\eqref{eq:subseq4}}, and on the event $\mathcal{E}_{\text{noise}}(\delta)$, it must satisfy $\|\tilde{w}_j\| \lesssim \mathcal{A}\sqrt{\log(j / \delta)} \lesssim \sqrt{\log(j / \delta)}$ for any $j$, where $\mathcal{A} = \sum_{\tau = 0}^\infty \| A^\tau \|$.
    \end{itemize}

    Combining the above two items, we have
    \begin{align}
        \left\| \tilde{x}_{\tilde{T}_{\text{stab}}+k} \right\|  \lesssim \rho^{k/2}
        \left\| \tilde{x}_{\tilde{T}_{\text{stab}}} \right\| + \sqrt{\log\left(i\left(\tilde{T}_{\text{stab}}+k\right) / \delta\right)}.
        \label{eq:state_norm_after_stab}
    \end{align}

    We shall next bound $\left\| \tilde{x}_{\tilde{T}_{\mathrm{stab}}} \right\|$ and $i(\tilde{T}_{\text{stab}}+k)$ respectively:

    \begin{enumerate}
        \item
        According to the definition of $\tilde{T}_{\text{stab}}$ and $i(\cdot)$ in~\eqref{eq:subseq1} and~\eqref{eq:Tstabt}, we have
        \begin{equation}
            i\left(\tilde{T}_{\text{stab}}\right) \leq T_{\text{stab}} + \log(T_{\text{stab}}).
            \label{eq:i_base}
        \end{equation}
        On $\mathcal{E}_{\text{est}}(\delta)$, according to Theorem~\ref{thm:prop}, item~\ref{item:Tstab}, we have $T_{\text{stab}} \lesssim (\log(1 / \delta))^2$, and hence
        \begin{align}
            & i\left(\tilde{T}_{\text{stab}}\right) \lesssim (\log(1 / \delta))^2 + \log((\log(1 / \delta))^2) \nonumber \\
            & \lesssim (\log(1 / \delta))^2.
        \end{align}
        Furthermore, according to Theorem~\ref{thm:prop}, item~\ref{item:xnorm}, on $\mathcal{E}_{\text{noise}}(\delta)$, we have $\| x_k \| \lesssim \log(k / \delta)$ for any $k$, and hence
        \begin{align}
            & \left\| \tilde{x}_{\tilde{T}_{\text{stab}}} \right\| = \left\| {x}_{i\left(\tilde{T}_{\text{stab}}\right)} \right\|  \lesssim \log\left(\frac{(\log(1 / \delta))^2}{\delta}\right) \nonumber\\
            &= \log(1 / \delta) + \log((\log(1 / \delta))^2) \lesssim \log(1 / \delta).
            \label{eq:xtTstab}
        \end{align}

        \item By definition of $i(\cdot)$ in~\eqref{eq:subseq1}, we have
        \begin{equation}
            i \left( \tilde{T}_{\text{stab}} + k + 1 \right) \leq \tilde{T}_{\text{stab}} + k + \log \left( \tilde{T}_{\text{stab}} + k \right), \forall k \in \mathbb{N}^*.
            \label{eq:i_rec}
        \end{equation}
        Applying induction to~\eqref{eq:i_base} and~\eqref{eq:i_rec}, we can obtain
        \begin{equation}
            i\left(\tilde{T}_{\text{stab}}+k\right) \lesssim k\log(T_{\text{stab}}k) + T_{\text{stab}}.
            \label{eq:iTstabk1}
        \end{equation}
        Substituting $T_{\text{stab}} \lesssim (\log(\revise{1} / \delta))^2$, which holds on $\mathcal{E}_{\text{est}}(\delta)$ according to Theorem~\ref{thm:prop}, item~\ref{item:Tstab}), into~\eqref{eq:iTstabk1}, we have
        \begin{equation}
            i\left(\tilde{T}_{\text{stab}}+k\right) \lesssim k(\log(k / \delta))^2.
            \label{eq:iTstabk2}
        \end{equation}
    \end{enumerate}

    Hence, inequality~\eqref{eq:bound_xt} follows from substituting~\eqref{eq:xtTstab} and \eqref{eq:iTstabk2} into~\eqref{eq:state_norm_after_stab}. Moreover, since $\left\| \hat{K}_{i(\tilde{T}_{\text{stab}}+k)}\right\|$ is uniformly bounded by definition of $T_{\text{stab}}$ in~\eqref{eq:Tstab}, we have
    \begin{equation*}
        \left\| \tilde{u}^{ce}_{\tilde{T}_{\text{stab}}+k} \right\| \leq\left\| \hat{K}_{i(\tilde{T}_{\text{stab}}+k)}\right\| \left\| \tilde{x}_{\tilde{T}_{\text{stab}}+k} \right\| \lesssim \left\| \tilde{x}_{\tilde{T}_{\text{stab}}+k} \right\|,
    \end{equation*}
    which implies~\eqref{eq:bound_ut}.
\end{proof}

\subsubsection{Upper bound on \( T_{\text{nocb}} \)}

Now we are ready to prove Theorem~\ref{thm:prop}, item~\ref{item:Tnocb}):

\begin{proof}
    \revise{Consider a fixed $\alpha > 0$. For any arbitrarily small \( \epsilon > 0 \)}, according to~\eqref{eq:iTstabk2}, we have \( i(\tilde{T}_{\text{stab}} + k) \lesssim (1 / \delta)^{\alpha+\epsilon} \) as long as \( k \lesssim (1 / \delta)^{\alpha - \epsilon} \). Hence, \revise{by definition of $T_{\text{nocb}}$ in~\eqref{eq:Tnocb}}, we only need to prove
    \begin{equation}
        u_k^{cb} \revise{=} u_k^{ce} , \quad \forall k \geq i\left(\tilde{T}_{\text{stab}} + k_0\right),
        \label{eq:u_equiv}
    \end{equation}
    for some \( k_0 \lesssim (1 / \delta)^{\alpha - \epsilon} \).

    Notice that~\eqref{eq:u_equiv} is equivalent to
    \begin{equation*}
        \tilde{u}^{cb}_{{\tilde{T}_{\text{stab}}+k}} \revise{=} \tilde{u}^{ce}_{{\tilde{T}_{\text{stab}}+k}}, \quad \forall k \geq k_0,
    \end{equation*}
    which is then equivalent to
    \begin{equation*}
        \max\left\{\left\| \tilde{u}^{ce}_{\tilde{T}_{\text{stab}}+k} \right\|, \left\| \hat{K}_{\tilde{T}_{i(\text{stab})}+k} \right\|\right\} \leq M_k = \log(k), \quad \forall k \geq k_0.
    \end{equation*}
    Since $\left\| \hat{K}_{i(\tilde{T}_{\text{stab}}+k)}\right\|$ is uniformly bounded by definition of $T_{\text{stab}}$ in~\eqref{eq:Tstab}, according to Proposition~\ref{prop:u_upper_bound}, we only need to verify
    \begin{equation*}
        \rho^{k/2} \log(1 / \delta) + \sqrt{\log(k / \delta)}  \lesssim \log(k),
    \end{equation*}
    whenever \( k \gtrsim (1 / \delta)^{\alpha - \epsilon} \). In such case, we have
    \begin{align*}
        & \rho^{k/2} \log(1 / \delta) + \sqrt{\log(k / \delta)} \nonumber \\
        & \lesssim \log(1 / \delta) + \sqrt{\log(k) + \log(1 / \delta)} \nonumber \\
        & \lesssim \log(k) + \sqrt{\log(k)} \lesssim \log(k),
    \end{align*}
    from which the conclusion follows.
\end{proof}

\section{Proof of Theorem~\ref{thm:R_prob}}
\label{app:proof_R_prob}

In this appendix, we first decompose the regret of the proposed controller into multiple terms, then derive upper bounds on the terms respectively to obtain the high-probability regret bound stated in Theorem~\ref{thm:R_prob}.

\subsection{Regret decomposition}

\begin{lemma}
    Let \( K^*, P^* \) be defined in~\eqref{eq:K}, \eqref{eq:dare} respectively, and \(K = K^* + \Delta K\), then
    \begin{align}
        & Q + {K}^\top R{K} + \left(A+B{K}\right)^\top P^*\left(A+B{K}\right) - P^*  \nonumber \\
        =& \Delta K^\top(R+B^\top P^*B) \Delta K.
        \label{eq:perturb_K}
    \end{align}
    \label{lemma:perturb_K}
\end{lemma}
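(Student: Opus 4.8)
The plan is to prove~\eqref{eq:perturb_K} by direct expansion, substituting $K = K^* + \Delta K$ and $A + BK = (A+BK^*) + B\,\Delta K$, and then grouping the resulting terms by their degree in $\Delta K$. To keep the bookkeeping light I would write $A_* := A + BK^*$ for the optimal closed-loop matrix and $H := R + B^\top P^* B \succ 0$, so that $K^* = -H^{-1}B^\top P^* A$ by~\eqref{eq:K} and the right-hand side of~\eqref{eq:perturb_K} is simply $\Delta K^\top H\,\Delta K$.

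Expanding the left-hand side produces three groups. The degree-zero part is $Q + K^{*\top} R K^* + A_*^\top P^* A_* - P^*$, which vanishes because $P^*$ satisfies the closed-loop Lyapunov equation~\eqref{eq:lyap_opt}. The degree-two part is $\Delta K^\top R\,\Delta K + \Delta K^\top B^\top P^* B\,\Delta K = \Delta K^\top H\,\Delta K$, which is exactly the claimed right-hand side. Hence the identity reduces to showing that the degree-one part, namely $\Delta K^\top\!\left(RK^* + B^\top P^* A_*\right) + \left(RK^* + B^\top P^* A_*\right)^{\!\top}\Delta K$, is identically zero; since the two summands are transposes of each other, it suffices to show $RK^* + B^\top P^* A_* = 0$.

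This last cancellation is precisely the optimality (normal-equation) characterization of $K^*$: from $HK^* = -B^\top P^* A$ we get $RK^* + B^\top P^* A_* = RK^* + B^\top P^* A + B^\top P^* B K^* = HK^* + B^\top P^* A = 0$. Adding the three groups then yields~\eqref{eq:perturb_K}. I do not expect a genuine obstacle here — this is the classical ``completion of squares'' identity underlying the discrete-time LQR/Riccati theory, and the only care required is pairing each cross term with its transpose and invoking the normal equation for $K^*$ at the right moment; one could equally derive it starting from the DARE~\eqref{eq:dare} instead of~\eqref{eq:lyap_opt}, but the term-by-term expansion above is the most transparent route.
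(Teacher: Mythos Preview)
Your proof is correct and essentially identical to the paper's: both substitute the Lyapunov equation~\eqref{eq:lyap_opt} to kill the zeroth-order terms, identify the quadratic part as $\Delta K^\top(R+B^\top P^*B)\Delta K$, and eliminate the cross term via the normal equation $(R+B^\top P^*B)K^* + B^\top P^* A = 0$ coming from~\eqref{eq:K}. The only difference is cosmetic --- you organize explicitly by degree in $\Delta K$, whereas the paper first subtracts~\eqref{eq:lyap_opt} and then names the cross term $G$.
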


\begin{proof}
    Substituting the Lyapunov equation~\eqref{eq:lyap_opt} into the LHS of~\eqref{eq:perturb_K}, we have
    \begin{align*}
        &Q + {K}^\top R{K} + (A+B{K})^\top P^*(A+B{K}) - P^*  \nonumber \\
        =& {K}^\top R{K} - \left({K^*}\right)^\top RK^* + (A+B{K})^\top P^*(A+B{K}) - \nonumber \\
        &\quad \left(A+BK^*\right)^\top P^* \left(A+BK^*\right) \nonumber \\
        =& \Delta K^\top \left(R+B^\top P^* B\right) \Delta K + G + G^\top,
    \end{align*}
    where
    \begin{equation*}
        G = \Delta K^\top \left[(R+B^\top P^* B) K^* + B^\top P^* A\right].
    \end{equation*}
    By definition of \( K^* \) in~\eqref{eq:K}, we have \(G = 0\), and hence the conclusion holds.
\end{proof}

\begin{proposition}
    \revise{
    The regret of the proposed controller defined in~\eqref{eq:regret} can be decomposed as
    \begin{equation}
        \mathcal{R}(T) = \sum_{i=1}^5 \mathcal{R}_i(T),
        \label{eq:regret_decomposition}
    \end{equation}
    with the terms $\mathcal{R}_i(T)$ defined as:
    \begin{align}
        & \mathcal{R}_1(T) = \sum_{k=1}^T x_k^\top (K_k - K^*)^\top (R + B^\top P^*B)(K_k -K^*)x_k, \label{eq:R1} \\
        & \mathcal{R}_2(T) = 2\sum_{k=1}^T s_k^\top P^* (A+BK_k) x_k + (u_k^{pr})^\top R u_k^{cb} + \nonumber \\
        & \mkern70mu w_k^\top P^*B u_k^{pr},\\
        & \mathcal{R}_3(T) = \sum_{k=1}^T (u_k^{pr})^\top (R+B^\top P^* B) u_k^{pr}, \\
        & \mathcal{R}_4(T) = \sum_{k=1}^T w_k^\top P^*w_k  - TJ^*, \\
        & \mathcal{R}_5(T) = x_1^\top P^* x_1 - x_{T+1}^\top P^* x_{T+1}, \label{eq:R5}
    \end{align}
    where \(K_k, s_k\) are defined as:
    \begin{equation}
        K_k = \begin{cases}
            \hat{K}_k & u^{cb}_k = u^{ce}_k \\ 0 & \text{otherwise}
        \end{cases}, s_k = Bu^{pr}_k + w_k.
        \label{eq:Ks_def}
    \end{equation}
    }
    \label{prop:regret_decomposition}
\end{proposition}

\begin{proof}
    From $u_k = u_k^{cb} + u_k^{pr} = K_k x_k + u_k^{pr}$, it holds
    \begin{align}
        \mathcal{R}(T) &= \sum_{k=1}^T \left(x_k^\top Q x_k + u_k^\top R u_k\right) - TJ^* \nonumber\\
        &= \sum_{k=1}^T \left[x_k^\top\left(Q+K_k^\top RK_k\right)x_k + 2\left(u^{pr}_k\right)^\top Ru^{cb}_k+ \right. \nonumber\\
        &\qquad \left. \left(u^{pr}_k\right)^\top Ru^{pr}_k\right] - TJ^* \nonumber \\
        &= \sum_{k=1}^T \left[x_k^\top\left(Q+K_k^\top RK_k\right)x_k + x_{k+1}^\top P^*x_{k+1} - \right.\nonumber
        \\ & \qquad \left.x_k^\top P^*x_k\right] -TJ^* +  \nonumber \\
        & \qquad 2\sum_{k=1}^T (u_k^{pr})^\top  R u_k^{cb} +\sum_{k=1}^T (u_k^{pr})^\top R u_k^{pr} + \nonumber \\
        & \qquad \mathcal{R}_5(T).
    \end{align}
    We can further expand the first term in the RHS of the above equality: from $x_{k+1} = (A+BK_k)x_k + s_k$, we have
    \begin{align}
        & x_k^\top\left(Q+K_k^\top RK_k\right)x_k + x_{k+1}^\top P^*x_{k+1} - x_k^\top P^*x_k \nonumber\\
        =& x_k^\top \left[ Q+K_k^\top RK_k + (A+BK_k)^\top P^*(A+BK_k) - P^* \right]x_k \nonumber\\
        & \quad + 2 s_k^\top P^*(A+BK_k) x_k+ s_k^\top P^*s_k \nonumber \\
        =& x_k^\top (K_k - K^*)^\top (R + B^\top P^*B)(K_k -K^*) x_k + \nonumber\\
        & \quad  2 s_k^\top P^*(A+BK_k) x_k+ s_k^\top P^*s_k,
    \end{align}
    where the last equality follows from Lemma~\ref{lemma:perturb_K}. It follows from simple algebra that
    \begin{align}
        & \sum_{k=1}^T\left[x_k^\top\left(Q+K_k^\top RK_k\right)x_k + x_{k+1}^\top P^*x_{k+1} - x_k^\top P^*x_k\right]
        \nonumber \\
        & \qquad -TJ^* + 2\sum_{k=1}^T (u_k^{pr})^\top  R u_k^{cb} +\sum_{k=1}^T (u_k^{pr})^\top R u_k^{pr} \nonumber \\
        & = \sum_{i=1}^4 \mathcal{R}_i(T),
    \end{align}
    and hence the conclusion follows.
\end{proof}

\subsection{Upper bound on regret terms}

Next we shall bound the terms $\mathcal{R}_i(T)\,(i=1,\ldots,5)$ respectively:

\begin{proposition}
    The regret terms defined in~\eqref{eq:R1}-\eqref{eq:R5} can be bounded as follows:
    \begin{enumerate}
        \item On the event \( \mathcal{E}_{\text{noise}}(\delta) \cap \mathcal{E}_{\text{est}}(\delta) \), for $T > T_{\text{nocb}}$, it holds
        \begin{equation}
            \mathcal{R}_1(T) \lesssim (1 / \delta)^{1/4}(\log(1 / \delta))^4 + \sqrt{T}(\log(T / \delta))^5.
            \label{eq:bnd_R1}
        \end{equation}

        \item On the event \( \mathcal{E}_{\text{cross}}(\delta) \), it holds
        \begin{equation}
            \left| \mathcal{R}_2(T) \right| \lesssim \sqrt{T} (\log(T / \delta))^2.
            \label{eq:bnd_R2}
        \end{equation}

        \item On the event  \( \mathcal{E}_{\text{noise}}(\delta)\), it holds
        \begin{equation}
            |\mathcal{R}_3(T)| \lesssim \sqrt{T}\log(T / \delta).
            \label{eq:bnd_R3}
        \end{equation}

        \item On the event \( \mathcal{E}_{\text{cov}} (\delta)\), it holds
        \begin{equation}
            | \mathcal{R}_4(T) | \revise{\lesssim} \sqrt{T \log(1 / \delta)}.
            \label{eq:bnd_R4}
        \end{equation}

        \item On the event  \( \mathcal{E}_{\text{noise}}(\delta) \), it holds
        \revise{
        \begin{equation}
            | \mathcal{R}_5(T) | \lesssim (\log(T / \delta))^2.
            \label{eq:bnd_R5}
        \end{equation}
        }
    \end{enumerate}
    \label{prop:regret_terms}
\end{proposition}

\begin{proof}
    \begin{enumerate}
        \item
        Let
        \begin{equation}
            r_{1k} = x_k^\top (K_k - K^*)^\top \left(R + B^\top P^*B\right)(K_k -K^*)x_k.
        \end{equation}
        We shall next bound $\sum_{k=1}^{T_{\text{nocb}}} r_{1k}$ and $\sum_{k = T_{\text{nocb} + 1}}^T r_{1k}$ respectively:
        \begin{enumerate}
            \item For $k\leq T_{\text{nocb}}$, we have $\| x_k \| \lesssim \log( k / \delta)$ by Theorem~\ref{thm:prop}, item~\ref{item:xnorm}), and $\| K_k x_k \| = \| u_k^{cb} \| \leq \log(k)$. Therefore, it holds
            \begin{equation*}
                r_{1k} \lesssim (\log(k / \delta))^2,
            \end{equation*}
            and hence,
            \begin{equation*}
                \sum_{k=1}^{T_{\text{nocb}}} r_{1k} \lesssim T_{\text{nocb}} (\log(T_{\text{nocb}}/\delta))^2.
            \end{equation*}
            Invoking Theorem~\ref{thm:prop}, item~\ref{item:Tnocb}) with $\alpha = 1/4$, we get
            \begin{equation}
                \sum_{k=1}^{T_{\text{nocb}}} r_{1k} \lesssim (1 / \delta)^{1/4}(\log(1 / \delta))^4.
                \label{eq:r11}
            \end{equation}

            \item For \(k\leq T_{\text{nocb}}\), by definition of \(T_{\text{nocb}}\), we have \(K_k = \hat{K}_k\).
            \revisethree{On the event $\mathcal{E}_{\text{est}}$ defined in Theorem~\ref{thm:prop}, item~\ref{item:Eest}), $(\hat{A}_k, \hat{B}_k)$ converges to $(A, B)$, regardless of the controllability or stabilizability of $(\hat{A}_k, \hat{B}_k)$ at any particular $k$. Since $(A,B)$ is controllable by assumption and controllability is an open property~\cite{lewis2009semicontinuity}, we have that $(\hat{A}_k, \hat{B}_k)$ is controllable for sufficiently large $k$ on $\mathcal{E}_{\text{est}}$.}
            Hence, by definition of \( \mathcal{E}_{\text{est}} \) and the fact that \(\hat{K}_k\) is a continuous function of \(\hat{\Theta}_k\)~\cite[Proposition~6]{simchowitz2020naive}, we have
            \begin{align*}
                & \left\| K_k - K^* \right\| = \left\| \hat{K}_k - K^* \right\| \nonumber \\
                & \lesssim \left\| \hat{\Theta}_k - \Theta \right\| \lesssim k^{-1/4} (\log(k / \delta))^{3/2}.
            \end{align*}
            Furthermore, by Theorem~\ref{thm:prop}, item~\ref{item:xnorm}), we have \( \| x_k \| \lesssim \log(k / \delta) \), and hence,
            \begin{equation*}
                r_{1k} \lesssim \| K_k - K ^*\|^2 \|  x_k \|^2 \lesssim k^{-1/2} (\log(k / \delta))^5.
            \end{equation*}
            Therefore,
            \begin{equation}
                \sum_{k = T_{\text{nocb}} + 1}^T r_{1k} \lesssim \sqrt{T}(\log(T / \delta))^5.
                \label{eq:r12}
            \end{equation}
        \end{enumerate}
        Summing up~\eqref{eq:r11} and~\eqref{eq:r12} leads to~\eqref{eq:bnd_R1}.

        \item The inequality~\eqref{eq:bnd_R2} follows directly from the definition of \( \mathcal{E}_{\text{cross}} (\delta) \) in~\eqref{eq:Ecross}.

        \item \revise{From \( u_k^{pr} = k^{-1/4} v_k \), we have
        \begin{equation*}
            (u_k^{pr})^\top (R+B^\top P^* B) u_k^{pr} \lesssim k^{-1/2} \| v_k \|^2,
        \end{equation*}
        and hence, by definition of \( \mathcal{E}_{\text{noise}} \),
        \begin{equation*}
            (u_k^{pr})^\top (R+B^\top P^* B) u_k^{pr} \lesssim k^{-1/2} \log(k / \delta),
        \end{equation*}
        from which~\eqref{eq:bnd_R3} follows.}

        \item Since \( J^* = \tr(P^*) \) (see~\eqref{eq:Jstar}), we have
        \begin{align*}
            \left|\mathcal{R}_4(T)\right| &= \left|\sum_{k=1}^T \tr(w_kw_k^\top P) - T\tr(P)\right| \nonumber\\
            &= \left|\tr \left( \left( \sum_{k=1}^T(w_kw_k^\top - I_n) \right)P \right)\right| \nonumber\\
            &\lesssim \left\|  \sum_{k=1}^T(w_kw_k^\top - I_n) \right\| \lesssim \sqrt{T \log( 1 / \delta)},
        \end{align*}
        where the last inequality follows from the definition of \( \mathcal{E}_{\text{cov}}(\delta) \), which proves~\eqref{eq:bnd_R4}.

        \item \revise{The inequality~\eqref{eq:bnd_R5} is a direct corollary of Theorem~\ref{thm:prop}, item~\ref{item:xnorm}).}
        \qedhere
    \end{enumerate}
\end{proof}

Theorem~\ref{thm:R_prob} follows from combining Propositions~\ref{prop:regret_decomposition} and \ref{prop:regret_terms}.

\ifthenelse{\boolean{tacVersion}}{

}
{
  \section{Supplementary Note: Absence of Dwell Time May Lead to Destabilization}
  
In this note, we provide a counterexample to show that the absence of a dwell time $t_k$ in the proposed controller may lead to the destabilization of the system.

To facilitate the analysis, we consider a simplified setting where $\hat{K}$ is already the optimal controller, and no online adaptation is performed, i.e., $v_k = 0$ and $\hat{K}_k = K^*$ for any $k$. We focus instead on the stability of the system under the switching associated with the circuit-breaking mechanism. In particular, consider the switched system with modes
\begin{equation}
A_0 = \begin{bmatrix}
    0.9 & 0 \\
    0 & 0
\end{bmatrix}, A_1 = \begin{bmatrix}
    1.5 & 0.5 \\
    -2 & -0.5
\end{bmatrix},\footnote{
    These closed-loop system matrices can be realized by, e.g.,
    $A = \begin{bmatrix}
        1.5015 & 0.5006 \\
        -1.9997 & -0.4999
    \end{bmatrix}$, $B = 0.01I$, $Q=I$, $R=I$, $K^* = \begin{bmatrix}
        -0.1477 & -0.0594 \\
        -0.0260 & -0.0141
    \end{bmatrix}$, $K_0 = \begin{bmatrix}
        -60.1480 & -50.0594 \\
        199.974 & 49.9859
    \end{bmatrix}$. It can be verified that $A_0 = A+BK_0, A_1 = A+BK^*$, and that $K^*$ is the optimal LQR gain for $A, B, Q, R$. Here $K_0$ stands for the pre-stabilizer mentioned in Remark~\ref*{rem:prestabilizer}.
}
\label{eq:counterexample_A}
\end{equation}
both being stable since $\rho(A_0) = 0.9 < 1$ and $\rho(A_1) = 0.5 < 1$.
The system evolves, in the absence of dwell time, under the law
\begin{equation}
    \begin{cases}
        x_{k+1} = A_0 x_k + w_k, & \text{if } \| x_k \| \geq M_k, \\
        x_{k+1} = A_1 x_k + w_k, & \text{if } \| x_k \| < M_k,
    \end{cases}
    \label{eq:counterexample_system}
\end{equation}
where $w_k \stackrel{\text { i.i.d. }}{\sim} \mathcal{N}(0, I)$, and the thresholds $M_k$ be deterministic values satisfying
\begin{equation}
    M_k \geq (\log(k))^{\frac12 + \epsilon}, \limsup_{k\to\infty}\frac{M_{k+1}}{M_k} < 1.1,
    \label{eq:counterexample_M}
\end{equation}
where $\epsilon > 0$.

\begin{proposition}
    For the switched system described in \eqref{eq:counterexample_system} and \eqref{eq:counterexample_M}, it holds $\mathbb{P}(\lim_{k\to\infty} \| x_k \| = \infty) > 0$, i.e., the system has a nonzero probability of destabilization.
    \label{prop:counterexample}
\end{proposition}

\begin{proof}
    Define $\bar{x}_k = x_k / M_k$, $\bar{w}_k = w_k / M_{k+1}$, $\bar{A}_{0,k} = (M_k / M_{k+1})A_0$, $\bar{A}_{1,k} = (M_k / M_{k+1})A_1$, then~\eqref{eq:counterexample_system} can be rewritten as:
    \begin{equation}
        \begin{cases}
            \bar{x}_{k+1} = \bar{A}_{0,k} \bar{x}_k + \bar{w}_k, & \text{if } \| \bar{x}_k \| \geq 1, \\
            \bar{x}_{k+1} = \bar{A}_{1,k} \bar{x}_k + \bar{w}_k, & \text{if } \| \bar{x}_k \| < 1.
        \end{cases}
        \label{eq:counterexample_system_bar}
    \end{equation}
    Define
    \begin{equation}
    T = \sup\{k \geq 0 | M_{k+1} / M_k \geq 1.1\},
    \label{eq:counterexample_T}
    \end{equation}
    then according to~\eqref{eq:counterexample_M}, $T$ is a deterministic integer such that $T < \infty$.

    Consider the following two events:
    \begin{align}
        & \mathcal{E}_1 = \left\{ \left|e_1^\top \bar{x}_T\right| \geq 1 \right\}, \\
        & \mathcal{E}_2 = \left\{ \| \bar{w}_k \| \leq 0.01, \forall k \geq T \right\},
    \end{align}
    where $e_1 = \begin{bmatrix}
        1 & 0
    \end{bmatrix}^\top, e_2 = \begin{bmatrix}
        0 & 1
    \end{bmatrix}^\top$ and standard unit vectors.

    We proceed by showing that: a) $\mathbb{P}(\mathcal{E}_1) > 0$, b) $\mathbb{P}(\mathcal{E}_2) > 0$. Since $\mathcal{E}_1$ and $\mathcal{E}_2$ are independent events, a) and b) imply that $\mathbb{P}(\mathcal{E}_1 \cap \mathcal{E}_2) > 0$. Then it suffices to show that c) $\lim_{k\to\infty} \| x_k \| = \infty$ on $\mathcal{E}_1 \cap \mathcal{E}_2$.

    \paragraph{$\mathbb{P}(\mathcal{E}_1) > 0$} Let $\mathcal{F}_k$ be the $\sigma$-algebra generated by $\{w_0, \ldots, w_k\}$, then $e_1^\top x_k \mid \mathcal{F}_{k-1} \sim \mathcal{N}(\mu_k, 1)$ for any $k$, where $\mu_k \in \mathcal{F}_k$. Since $\mathcal{N}(\mu_k, 1)$ is supported on the entire real line, it follows that $\mathbb{P}(\left|e_1^\top x_k\right| \geq M_k \mid \mathcal{F}_{k-1}) > 0$ for any $k$, and hence $\mathbb{P}(\left|e_1^\top x_k\right| \geq M_k) > 0$ for any $k$. Choosing $k = T$ and plugging in $\bar{x}_k = x_k / M_k$ yields $\mathbb{P}(\mathcal{E}_1) > 0$.

    \paragraph{$\mathbb{P}(\mathcal{E}_2) > 0$} By~\eqref{eq:counterexample_M}, for any $k$, we have $$\left\{ \| \bar{w}_k \| > 0.01 \right\} \subseteq \left\{ \| w_k \| > 0.01 (\log(k+1))^{1/2+\epsilon} \right\},$$
    and hence, by~(\ref*{eq:w_tail}), we have
    \begin{equation}
    \mathbb{P}\left( \| \tilde{w}_k \| > 0.01 \right) \leq 2^{n/2}(k+1)^{-c (\log(k+1))^\epsilon}.
    \label{eq:counterexample_w_tail}
    \end{equation}
    Since the RHS of \eqref{eq:counterexample_w_tail} is summable, we have:
    \begin{equation*}
    \mathbb{P}\left( \| \bar{w}_k \| \leq 0.01, \forall k\geq k_0 \right) > 0
    \end{equation*}
    for some finite and deterministic $k_0$. Since $\{ \| \bar{w}_k \| \leq 0.01 \}$ are independent for different $k$ and has nonzero probability of each $k$, we further have $\mathbb{P}(\mathcal{E}_2) > 0$.

    \paragraph{$\lim_{k\to\infty} \| x_k \| = \infty$ on $\mathcal{E}_1 \cap \mathcal{E}_2$} Since $\lim_{k\to\infty} M_k = \infty$ and $\bar{x}_k = x_k / M_k$, we only need to prove that $\liminf_{k\to\infty} \| \bar{x}_k \| \geq 1$. Since $| e_1^\top \bar{x}_T | \geq 1$, we only need to prove that $| e_1^\top \bar{x}_{k+j} | \geq 1$ for some finite $j$ as long as $| e_1^\top \bar{x}_k | \geq 1$ and $k\geq T$, i.e., the first entry of $\bar{x}_k$ exceeds 1 for infinitely many $k$'s.

    Now we assume that $k \geq T$ and $| e_1^\top \bar{x}_k | \geq 1$. Let $i = \inf\{ i \geq 0 | \| \bar{x}_{k+i} \| < 1  \}$, i.e., $k+i$ is the first time after $k$ that $\bar{x}$ enters the region $\| \bar{x} \| < 1$. Note that our choice of $A_0$ removes the second component of the state vector, from which it follows that $| e_1^\top \bar{x}_{k+i-1} | \geq 0.99$: if $i = 1$, then the inequality follows trivially from the assumption; otherwise, $i \geq 2$, and
    \begin{align*}
        1 & \leq \| \bar{x}_{k+i-1} \| = \| A_0 \bar{x}_{k+i-2} + \bar{w}_{k+i-2} \| \\
        & = \left \|  (e_1^\top \bar{x}_{k+i-1}) e_1 + (e_2^\top \bar{w}_{k+i-2})e_2 \right \| \\
        & \leq | e_1^\top \bar{x}_{k+i-1} | + | e_2^\top \bar{w}_{k+i-2} | \leq | e_1^\top \bar{x}_{k+i-1} | + 0.01,
    \end{align*}
    from which it follows that $| e_1^\top \bar{x}_{k+i-1} | \geq 0.99$. Hence, we have
    \begin{align*}
        | e_1^\top \bar{x}_{k+i} | & = | e_1^\top A_0 \bar{x}_{k+i-1} + e_1^\top \bar{w}_{k+i-1} | \\
        & \geq 0.9 \frac{M_{k+i-1}}{M_{k+i}} | e_1^\top \bar{x}_{k+i-1} | - | e_1^\top \bar{w}_{k+i-1} | \\
        & \geq 0.9 \times \frac{1}{1.1} \times 0.99 - 0.01 = 0.8.
    \end{align*}
    Therefore, with $j = i + 1$, we have:
    \begin{align*}
        | e_1^\top \bar{x}_{k+j} | & = | e_1^\top A_1 \bar{x}_{k+i} + e_1^\top \bar{w}_{k+i} | \\
        & \geq 1.5 \frac{M_{k+j-1}}{M_{k+j}} | e_1^\top \bar{x}_{k+i} | - | e_1^\top \bar{w}_{k+i} | \\
        & \geq 1.5 \times \frac{1}{1.1} \times 0.8 - 0.01 > 1.
    \end{align*}
    Hence, we have found the desired $j$.
\end{proof}

A visual illustration of the proof above is shown in Fig.~\ref{fig:counterexample_proof}.

\begin{figure}[!htbp]
    \centering
  \tikzsetnextfilename{figures/counterexample_proof.tex}%
  \begin{tikzpicture}[
    nodeStyle/.style={circle, fill, draw, inner sep=2pt},
    lineStyle/.style={-latex, thick}
]
    \draw[thick,-latex] (0,1) -- (7.5,1) node[anchor=north west] {$k$};
    \draw[thick,-latex] (0,1) -- (0,5.5) node[anchor=south] {$\| x_k \| / M_k$};

    \draw (0,2) -- (-0.2,2) node[anchor=east] {$0.8$};
    \draw (0,3.5) -- (-0.2,3.5) node[anchor=east] {$1$};

    \draw[dash dot] (0,3.5) -- (7,3.5);
    \draw[dash dot] (0,2) -- (7,2);

    \node[nodeStyle] (S) at (0.4,4.8) {};
    \node[nodeStyle] (A) at (1,2.5) {};
    \node[nodeStyle] (B) at (2,5) {};
    \node[nodeStyle] (C) at (2.5, 4.7) {};
    \node[nodeStyle] (D) at (3,4) {};
    \node[nodeStyle] (E) at (4,2.8) {};
    \node[nodeStyle] (F) at (5,4.8) {};
    \node[nodeStyle] (G) at (5.5,4.5) {};
    \node[fill=none, anchor=west] (H) at (6,4.2) {\Large \textbf{$\cdots$}};

    \draw[lineStyle, blue] (S) -- node [near start, blue, anchor=west] {$A_0$} (A);
    \draw[lineStyle, red] (A) -- node [midway, red, anchor=west] {$A_1$} (B);
    \draw[lineStyle, blue] (B) -- (C);
    \draw[lineStyle, blue] (C) -- (D);
    \draw[lineStyle, blue] (D) -- node [near start, blue, anchor=west] {$A_0$}(E);
    \draw[lineStyle, red] (E) -- node [midway, red, anchor=west] {$A_1$}(F);
    \draw[lineStyle, blue] (F) -- (G);
    \draw[lineStyle, blue] (G) -- (H);

    \draw[dashed] (S) -- (0.4, 1);
    \draw (0.4, 1) -- (0.4, 0.8) node[anchor=north] {$T$};
\end{tikzpicture}%

    \caption{Visual illustration of the proof of Proposition~\ref{prop:counterexample}.
    Starting from step $T$ defined in \eqref{eq:counterexample_T}, the state norm $\| x_k \|$ oscillates indefinitely around the threshold $M_k$, since at every time step when $\| x_k \|$ drops below $M_k$, it must fall in the interval $[0.8 M_k, M_k]$, in which case the mode $A_1$ causes $\| x_k \|$ to exceed $M_k$ again.}
    \label{fig:counterexample_proof}
\end{figure}

So far, we have proved that switching between the two stable modes in~\eqref{eq:counterexample_A} without a dwell time may lead to the destabilization of the system. By contrast, it is evident from the main results that the introduction of a dwell time $t_k = \lfloor \log(k) \rfloor$ can prevent the destabilization almost surely. This comparison is also illustrated by numerical simulation: in Fig.~\ref{fig:counterexample}, we consider this particular switched system with thresholds $M_k = \log(k)$, and closed-loop systems with and without a dwell time are compared under the same realization of the process noise sequence sampled from $w_k \stackrel{\text { i.i.d. }}{\sim} \mathcal{N}(0, 0.01I)$. It is evident that the closed-loop system without a dwell time~\eqref{eq:counterexample_system}, has the state norm oscillating around the threshold $M_k$, and hence diverging to infinity. This diverging pattern is consistent with the proof of Proposition~\ref{prop:counterexample}. By contrast, with a  $t_k = \lfloor \log(k) \rfloor$ dwell time, i.e., the mode $A_0$ is selected for $\lfloor \log(k) \rfloor$ consecutive time steps if it is triggered at time $k$, the system is stabilized.

\begin{figure}[!htbp]
    \centering
  \tikzsetnextfilename{figures/counterexample.tex}%
  \input{figures/counterexample.tex}%

    \caption{Comparison of the trajectories of the switched system with modes $A_0, A_1$ in~\eqref{eq:counterexample_A} with and without a dwell time. The time axis is in logarithmic scale.}
    \label{fig:counterexample}
\end{figure}

When considering the adaptive control setting, where the probing noise $v_k$ is present, and the controller $\hat{K}_k$ is updated over time, we suspect the absence of a dwell time to have a similar destabilizing effect for certain systems, although the rigorous analysis can be more involved. For example, reference~\cite{wang2021exact}, which uses an algorithm similar to the one proposed in this paper but without a dwell time, does not ensure the stability of the closed-loop system, since the exchange of matrix multiplication order implicit in the proof of \cite[Lemma 43]{wang2021exact} may not generally hold true.

}

\begin{IEEEbiography}[{\includegraphics[width=1in,height=1.25in,clip,keepaspectratio]{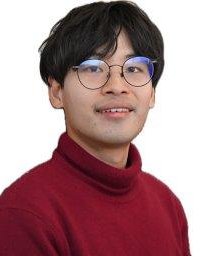}}]%
  {Yiwen Lu} received his Bachelor of Engineering degree from Department of Automation, Tsinghua University in 2020. He is currently a Ph.D. candidate in Department of Automation, Tsinghua University. His research interests include adaptive and learning-based control, with applications in robotics.
\end{IEEEbiography}
\vskip 0pt plus -1fil
\begin{IEEEbiography}[{\includegraphics[width=1in,height=1.25in,clip,keepaspectratio]{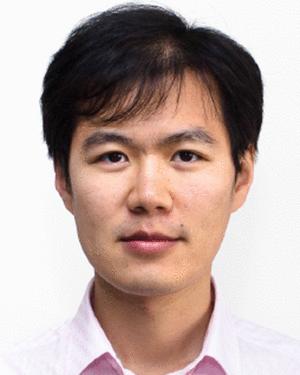}}]%
  {Yilin Mo} is an Associate Professor in the Department of Automation, Tsinghua University. He received his Ph.D. In Electrical and Computer Engineering from Carnegie Mellon University in 2012 and his Bachelor of Engineering degree from Department of Automation, Tsinghua University in 2007. Prior to his current position, he was a postdoctoral scholar at Carnegie Mellon University in 2013 and California Institute of Technology from 2013 to 2015. He held an assistant professor position in the School of Electrical and Electronic Engineering at Nanyang Technological University from 2015 to 2018. His research interests include secure control systems and networked control systems, with applications in sensor networks and power grids.
\end{IEEEbiography}

\end{document}